\newcommand{\ab}{\ol{1}}
\newcommand{\bb}{\ol{2}}
\newcommand{\cb}{\ol{3}}
\newcommand{\db}{\ol{4}}
\newcommand{\bij}{\Phi}
\newcommand{\diagram}{\mathrm{diagram}}
\newcommand{\geh}{\mathfrak{g}}
\newcommand{\la}{\lambda}
\newcommand{\La}{\Lambda}
\newcommand{\lev}{\mathrm{lev}}
\newcommand{\mb}{\text{}}
\newcommand{\ol}{\overline}
\newcommand{\ve}{\varepsilon}
\newcommand{\vp}{\varphi}
\newcommand{\wt}{\mathrm{wt}\,}
\newcommand{\Z}{\mathbb{Z}}
\numberwithin{equation}{section}
\newtheorem{theorem}{Theorem}
\newtheorem{prop}[theorem]{Proposition}
\newtheorem{corollary}[theorem]{Corollary}
\newtheorem{conj}[theorem]{Conjecture}
\theoremstyle{definition}
\newtheorem{definition}{Definition}
\numberwithin{theorem}{section}
\numberwithin{definition}{section}
\numberwithin{remark}{section}
\begin{document}

\title[Perfectness of KR crystals]
{Perfectness of Kirillov--Reshetikhin crystals for nonexceptional types}

\author[G.~Fourier]{Ghislain Fourier}
\address{Mathematisches Institut der Universit\"at zu K\"oln,
Weyertal 86-90, 50931 K\"oln, Germany}
\email{gfourier@mi.uni-koeln.de}

\author[M.~Okado]{Masato Okado}
\address{Department of Mathematical Science,
Graduate School of Engineering Science, Osaka University,
Toyonaka, Osaka 560-8531, Japan}
\email{okado@sigmath.es.osaka-u.ac.jp}

\author[A.~Schilling]{Anne Schilling}
\address{Department of Mathematics, University of California, One Shields
Avenue, Davis, CA 95616-8633, U.S.A.}
\email{anne@math.ucdavis.edu}
\urladdr{http://www.math.ucdavis.edu/\~{}anne}

\thanks{\textit{Date:} November 2008}

\begin{abstract}
For nonexceptional types, we prove a conjecture of Hatayama et al. about the 
prefectness of Kirillov--Reshetikhin crystals.
\end{abstract}

\maketitle

\section{Introduction}
Kirillov--Reshetikhin (KR) crystals $B^{r,s}$ are finite affine crystals corresponding to finite-dimensional 
$U_q'(\geh)$-modules~\cite{CP,CP:1998}, where $\geh$ is an affine Kac--Moody algebra. 
Recently, a lot of progress has been made regarding these crystals which appear in mathematical 
physics and the path realization of affine highest weight crystals~\cite{KMN1:1992}. 
In~\cite{O:2007,OS:2008} it was shown that the KR crystals exist and in~\cite{FOS:2008} 
combinatorial realizations for these crystals were provided for all nonexceptional types. In this paper, 
we prove a conjecture of Hatayama et al.~\cite[Conjecture 2.1]{HKOTT:2002} about the perfectness 
of KR crystals.

\begin{conj} \cite[Conjecture 2.1]{HKOTT:2002} \label{conj:perfectness}
The Kirillov-Reshetikhin crystal $B^{r,s}$ is perfect if and only if $\frac{s}{c_r}$ is an integer
with $c_r$ as in Table~\ref{tab:c}. If $B^{r,s}$ is perfect, its level is $\frac{s}{c_r}$.
\end{conj}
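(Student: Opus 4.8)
The plan is to verify the defining conditions of a perfect crystal directly from the combinatorial realization of $B^{r,s}$ given in~\cite{FOS:2008}. Recall that $B=B^{r,s}$ is perfect of level $\ell$ precisely when: (P1) $B\otimes B$ is connected; (P2) there is a weight $\la_0$ with $\wt(B)\subset\la_0+\sum_i\Z_{\leq 0}\alpha_i$ and a unique element of weight $\la_0$; (P3) $\lev(\ve(b))\geq\ell$ for every $b\in B$; and (P4) the maps $b\mapsto\ve(b)$ and $b\mapsto\vp(b)$ restrict to bijections from $B_{\min}:=\{b\in B:\lev(\ve(b))=\ell\}$ onto $(P^+_{\mathrm{cl}})_\ell$, the set of level-$\ell$ dominant weights of $\geh$. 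Since $B^{r,s}$ is the crystal of an irreducible finite-dimensional $U_q'(\geh)$-module by~\cite{O:2007,OS:2008}, conditions (P1) and (P2) reduce to structural facts: the unique maximal weight is $\la_0=s\,\ol{\La}_r$, the highest weight of the top classical component $B(s\,\ol{\La}_r)$ (with $\ol{\La}_r$ the $r$-th classical fundamental weight), and connectedness of $B\otimes B$ can be read off the model. So the heart of the matter is (P3) and (P4), and I would concentrate there.

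For (P3) and the value of $\ell$, I would compute $\lev(\ve(b))=\sum_i a_i^\vee\,\ve_i(b)$, with $a_i^\vee$ the comarks of $\geh$, directly on the tableaux and minimize over $b\in B$. A short extremal analysis should show that this minimum equals $\lceil s/c_r\rceil$, which is $s/c_r$ exactly when $c_r\mid s$; this simultaneously establishes the lower bound in (P3) with $\ell=s/c_r$. The same computation drives the \emph{only if} direction: when $c_r\nmid s$, the minimal attained level is $\lceil s/c_r\rceil>s/c_r$, and I would show that the corresponding $B_{\min}$ fails to be in bijection with $(P^+_{\mathrm{cl}})_{\lceil s/c_r\rceil}$ (its cardinality being wrong), so that (P4) cannot hold and $B^{r,s}$ is not perfect.

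The main work, and the step I expect to be the principal obstacle, is the bijection (P4). My plan is to describe $B_{\min}$ explicitly by characterizing the minimal tableaux, and then to construct an inverse to $\ve$: given $\la\in(P^+_{\mathrm{cl}})_\ell$, exhibit the unique $b\in B_{\min}$ with $\ve(b)=\la$. Because $B^{r,s}$ carries a natural involution interchanging the roles of $\ve$ and $\vp$ (up to a Dynkin diagram automorphism, so that $\ve(b^\ast)$ recovers $\vp(b)$), the statement for $\vp$ would then follow from the one for $\ve$, roughly halving the work. The difficulty is that outside type $A_n^{(1)}$ the crystal $B^{r,s}$ is not a single classical component, and in the non-simply-laced and twisted types the tableaux model is intricate; the combinatorics of the minimal elements and of $(P^+_{\mathrm{cl}})_\ell$ must be matched carefully, and the extremal/boundary cases are where genuine care is needed.

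To keep this uniform rather than a long type-by-type verification, I would exploit the virtual (similarity) crystal embeddings underlying~\cite{FOS:2008}: a KR crystal of non-simply-laced or twisted type embeds into one of simply-laced type, and $\ve$, $\vp$, $\lev$ all behave predictably under folding. The strategy is to establish (P3) and (P4) first for the ambient simply-laced family, then transport the conclusions through the embedding, checking that minimal elements, level-$\ell$ dominant weights, and the level itself correspond correctly. The remaining type-specific arguments would then be confined to the simply-laced base cases and to the small-rank coincidences, where the bijection of (P4) would be checked against the explicit list of minimal tableaux.
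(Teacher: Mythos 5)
Your overall skeleton (dispose of conditions (1)--(3) by structural citations, then prove the level bound and the minimal-element bijection, treating the non-perfect cases by showing the minimal level is $\lceil s/c_r\rceil$ and that $\ve$ cannot biject $B_{\min}$ onto the level-$\lceil s/c_r\rceil$ weights) matches the paper's in spirit; indeed the paper realizes your cardinality claim concretely by exhibiting, for $B^{n,2s+1}$ of type $B_n^{(1)}$ and $B^{r,2s+1}$ of type $C_n^{(1)}$ ($r<n$), two distinct elements $b_1\neq b_2$ with $\ve(b_1)=\ve(b_2)$. The genuine gap is in your uniformization step. You propose to prove perfectness for the simply-laced types and then ``transport the conclusions through the folding/virtual embeddings,'' asserting that $\ve$, $\vp$, $\lev$ behave predictably. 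No such transport theorem exists for the \emph{untwisted non-simply-laced} types $B_n^{(1)}$ and $C_n^{(1)}$: the result of Naito and Sagaki~\cite{NS:2006} covers only the twisted algebras. For $B_n^{(1)}$ and $C_n^{(1)}$ the foldings into $D_{n+1}^{(1)}$ and $A_{2n-1}^{(1)}$ rescale levels by the folding multipliers, the virtual image is a proper subcrystal, and --- precisely in the critical cases $r=n$ for $B_n^{(1)}$ and $r<n$ for $C_n^{(1)}$, the only cases with $c_r=2$ --- the simply-laced ambient object is a \emph{tensor product} of KR crystals (e.g. $B^{r,s}\otimes B^{2n-r,s}$ of type $A_{2n-1}^{(1)}$), not a single KR crystal. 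One must then prove, not assume, that the ambient minimal elements attached to embedded dominant weights lie exactly in the image; that bookkeeping is the hard content of the theorem, and your plan hides it inside the word ``predictably.'' (A smaller instance of the same optimism: connectedness of $B\otimes B$ is not ``read off the model'' but requires the automorphism $\sigma$ and \cite[Corollary 6.1]{FSS:2007}.)

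The paper's route around this is to choose different ambient crystals, going in what is essentially the opposite direction of your folding. Its base cases, proved by explicit $\pm$-diagram combinatorics, are not only the simply-laced ones: \cite{S:2008} directly handles $D_n^{(1)}$ ($r\le n-2$), $B_n^{(1)}$ ($r\le n-1$) \emph{and} the twisted type $A_{2n-1}^{(2)}$ ($r\le n$), with \cite{KMN2:1992} supplying type $A_n^{(1)}$ and the spin cases. The remaining crystals are then realized inside already-proven ones: $B^{n,s}$ of type $B_n^{(1)}$ embeds into $B^{n,s}_{A_{2n-1}^{(2)}}$, $B^{r,s}$ of type $C_n^{(1)}$ ($r<n$) is the $\sigma$-fixed-point set inside $B^{r,s}_{A_{2n+1}^{(2)}}$, and types $A_{2n}^{(2)}$, $D_{n+1}^{(2)}$ embed into $C_n^{(1)}$ crystals. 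The decisive advantage of these embeddings, taken from \cite{FOS:2008}, is that their images have a clean combinatorial characterization (doubled $\pm$-diagrams, respectively $\sigma$-invariance), so one can check that the ambient minimal elements corresponding to embedded level-$\ell$ weights are exactly those in the image --- which is how the paper's Propositions on $B_{\min}^{n,2s}$, $B_{\min}^{r,2s}$, and the twisted types conclude the bijection with $P^+_\ell$. To salvage your plan you would either have to prove a transport theorem for the simply-laced foldings (including the tensor-product ambient case), or replace your embeddings by these.
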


In~\cite{KMN2:1992}, this conjecture was proven for type $A_n^{(1)}$, for $B^{1,s}$ for 
nonexceptional types (except for type $C_n^{(1)}$), for $B^{n-1,s}$, $B^{n,s}$ of type $D_n^{(1)}$, 
and $B^{n,s}$ for types $C_n^{(1)}$ and $D_{n+1}^{(2)}$. When the highest weight is given by 
the highest root, level-$1$ perfect crystals were constructed in~\cite{BFKL}. For $1\le r\le n-2$ for 
type $D_n^{(1)}$, $1\le r\le n-1$ for type $B_n^{(1)}$, and $1\le r\le n$ for type $A_{2n-1}^{(2)}$, 
the conjecture was proved in~\cite{S:2008}. The case $G_2^{(1)}$ and $r=1$ was treated 
in~\cite{Y:1998} and the case $D_4^{(3)}$ and $r=1$ was treated in~\cite{KMOY}. Naito and 
Sagaki~\cite{NS:2006} showed that the conjecture holds for twisted algebras, if it is true for the 
untwisted simply-laced cases.

In this paper we prove Conjecture~\ref{conj:perfectness} in general for nonexceptional types.

\begin{theorem} \label{thm:perfect}
If $\geh$ is of nonexceptional type, Conjecture~\ref{conj:perfectness} is true. 
\end{theorem}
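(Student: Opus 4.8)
The plan is to verify the five defining conditions of a perfect crystal using the explicit combinatorial realizations of $B^{r,s}$ from~\cite{FOS:2008}, reducing all non-simply-laced types to the simply-laced types $A_n^{(1)}$ and $D_n^{(1)}$ via virtual crystal embeddings. Existence of the underlying $U_q'(\geh)$-module is supplied by~\cite{O:2007,OS:2008}, and connectedness of $B^{r,s}\otimes B^{r,s}$ can be read off the combinatorial model, so the real content is twofold: (a) compute the minimal value $\ell=\min_{b\in B^{r,s}}\langle c,\ve(b)\rangle$ and identify it with $s/c_r$; and (b) show that $\ve$ and $\vp$ restrict to bijections from the set $B_{\min}$ of minimal elements onto the level-$\ell$ dominant weights $(P^+)_\ell$.

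I would first settle the simply-laced cases, where $c_r=1$ and every $B^{r,s}$ should be perfect of level $s$. For $A_n^{(1)}$ the crystal $B^{r,s}\cong B(s\om_r)$ is a single classical component and perfectness is classical~\cite{KMN2:1992}; for $D_n^{(1)}$ the spin cases $r=n-1,n$ are in~\cite{KMN2:1992} and the cases $1\le r\le n-2$ in~\cite{S:2008}. In both families I would extract from the $\pm$-diagram and Kashiwara--Nakashima tableau description closed formulas for $\ve_i(b)$ and $\vp_i(b)$; these are the data the folding will transport.

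The core of the argument is the non-simply-laced list $B_n^{(1)}$, $C_n^{(1)}$, $A_{2n}^{(2)}$, $A_{2n-1}^{(2)}$, $D_{n+1}^{(2)}$, none of which is reached by the untwisted simply-laced input of~\cite{NS:2006} directly (the two untwisted members $B_n^{(1)}$, $C_n^{(1)}$ in particular need separate treatment). For each I would realize $B^{r,s}(\geh)$ as a virtual crystal inside a suitable (tensor product of) KR crystal(s) of an ambient simply-laced type $\hat\geh$, under a Dynkin diagram folding $\sigma\colon\Ih\to I$, so that the crystal operators are intertwined with the virtual operators $\et{i}^{\,v}=\prod_{j\in\sigma^{-1}(i)}\et{j}^{\,\gamma_i}$ and $\ft{i}^{\,v}=\prod_{j\in\sigma^{-1}(i)}\ft{j}^{\,\gamma_i}$ for the scaling factors $\gamma_i$ attached to the folding. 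Then $\ve_i$ and $\vp_i$ are recovered from the ambient functions, a virtual element is minimal for $\geh$ iff its image is minimal for $\hat\geh$, and the factors $\gamma_i$ convert the ambient level $\lh$ into $\ell=s/c_r$, forcing simultaneously the divisibility $c_r\mid s$ and the value of the level. For the ``if'' direction, the bijection $B_{\min}\xrightarrow{\sim}(P^+)_\ell$ is then inherited from the simply-laced bijection once one identifies the $\sigma$-invariant level-$\lh$ weights of $\hat\geh$ with the level-$\ell$ weights of $\geh$.

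The hard part will be condition (b), in two guises. First, in the virtual setting surjectivity is not automatic: one must check weight-by-weight that \emph{every} level-$\ell$ dominant weight of $\geh$ is realized by a genuine, $\sigma$-symmetric virtual minimal element, rather than merely by a minimal element of $\hat\geh$ that breaks the folding symmetry. Second, the ``only if'' direction requires showing that when $s/c_r\notin\Z$ the bijection provably fails; here I expect to use the explicit model to count $B_{\min}$ and demonstrate that its cardinality does not match $|(P^+)_{\ell_0}|$ for the integer $\ell_0=\min_b\langle c,\ve(b)\rangle$, so that no perfect structure of any level can exist. Carrying out these surjectivity and counting arguments uniformly across all classical ranks and, in particular, at the spin node $r=n$ of $B_n^{(1)}$, is the step I anticipate will absorb most of the technical effort.
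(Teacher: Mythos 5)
Your overall philosophy (known base cases plus embeddings that transport minimality, then explicit bookkeeping of minimal elements) matches the paper, but the load-bearing step of your plan does not exist as stated. You propose to realize every non-simply-laced $B^{r,s}$ as a virtual crystal inside KR crystals of a simply-laced ambient type $A_N^{(1)}$ or $D_N^{(1)}$, and you attribute these realizations to~\cite{FOS:2008}. That is not what~\cite{FOS:2008} provides: the realizations established there, and used in Section~\ref{sec:realization} of the paper, embed into \emph{twisted} ambient types or into $C_n^{(1)}$, namely $B^{n,s}$ of type $B_n^{(1)}$ into $B^{n,s}_{A_{2n-1}^{(2)}}$ (Definition~\ref{def:Vn B}), $B^{r,s}$ of type $C_n^{(1)}$, $r<n$, as the $\sigma$-fixed points inside $B^{r,s}_{A_{2n+1}^{(2)}}$ (Definition~\ref{def:KR C}), and $B^{r,s}$ of types $A_{2n}^{(2)}$, $D_{n+1}^{(2)}$ by doubling into $B^{r,2s}_{C_n^{(1)}}$ (Definition~\ref{def:embedding}). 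The simply-laced virtual realizations you want (for instance $B_n^{(1)}$ inside $D_{n+1}^{(1)}$, or $C_n^{(1)}$ inside type $A$ KR tensor products) were at that point still a conjecture of Okado--Schilling--Shimozono for general $r,s$, so your first step silently assumes an unproven result that is comparable in difficulty to the theorem itself. The paper's reduction chain in fact runs in the opposite direction from yours: the twisted type $A_{2n-1}^{(2)}$, together with $B_n^{(1)}$ ($r<n$) and $D_n^{(1)}$ ($r\le n-2$), is handled \emph{directly} by the $\pm$-diagram constructions of~\cite{S:2008}; $A_n^{(1)}$ and the spin nodes come from~\cite{KMN2:1992}; then the spin node of $B_n^{(1)}$ is reduced to $A_{2n-1}^{(2)}$, type $C_n^{(1)}$ is reduced to $A_{2n+1}^{(2)}$, and $A_{2n}^{(2)}$, $D_{n+1}^{(2)}$ are reduced to $C_n^{(1)}$.

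Two further points. First, your assertion that none of the twisted types is reached by the result of~\cite{NS:2006} is backwards: that theorem delivers precisely the twisted types from the untwisted simply-laced ones, and the paper does invoke it for $A_{2n}^{(2)}$ (where Assumption~1 of~\cite{FSS:2007} fails); it is exactly the untwisted types $B_n^{(1)}$ and $C_n^{(1)}$ that require new arguments. Second, the two difficulties you flag are resolved in the paper by sharper tools than you propose: surjectivity of $\ve$ on $\mathcal{B}_{\min}$ is proved by explicitly constructing, for each $\La\in P_\ell^+$, a candidate minimal element from a $\pm$-diagram and a string of lowering operators, and then checking that these elements are doubled/symmetric, hence lie in the image of the relevant embedding (Propositions~\ref{prop:minimal B Vn}, \ref{prop:minimal C}, \ref{prop:minimal A2n}, \ref{prop:minimal Dn+1}); and non-perfectness when $c_r\nmid s$ is established not by a cardinality count but by exhibiting two explicit elements $b_1\neq b_2$ with $\ve(b_1)=\ve(b_2)$ of the minimal level, contradicting the uniqueness in condition~(5) of Definition~\ref{def:perfect} (Propositions~\ref{prop:B nonperfect} and~\ref{prop:C nonperfect}). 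Finally, connectedness of $B^{r,s}\otimes B^{r,s}$ cannot simply be ``read off the combinatorial model''; the paper needs \cite[Corollary 6.1]{FSS:2007} for it.
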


The paper is organized as follows. In Section~\ref{sec:perfect} we give basic notation and the 
definition of perfectness in Definition~\ref{def:perfect}. In Section~\ref{sec:realization} we review
the realizations of the KR crystals of nonexceptional types as recently provided in~\cite{FOS:2008}.
Section~\ref{sec:proof} is reserved for the proof of Theorem~\ref{thm:perfect} and an explicit
description of the minimal elements $B_{\min}^{r,c_r s}$ of the perfect crystals. Several examples
for KR crystals of type $C_3^{(1)}$ are given in Section~\ref{sec:ex}.

\subsection*{Acknowledgements.}
GF was supported in part by DARPA and AFOSR through the grant FA9550-07-1-0543 and by
the DFG-Projekt ``Kombinatorische Beschreibung von Macdonald und Kostka-Foulkes Polynomen''.
MO was supported by grant JSPS 20540016.
AS was partially supported by the NSF grants DMS--0501101, DMS--0652641, and DMS--0652652.
We would like to thank the organizers of the conference``Quantum affine Lie algebras, extended affine 
Lie algebras, and applications'' held at Banff where part of this work was carried out and presented. 
Figure~\ref{fig:B21} was produced using MuPAD-Combinat/Sage-combinat.

\begin{table}
\begin{align*}
\begin{array}{|c|c|c}
\hline
& (c_1,\ldots,c_n)\\[1mm]
\hline \hline
A_n^{(1)} & (1,\ldots,1)\\[1mm] \hline
B_n^{(1)}   & (1,\ldots,1,2)\\[1mm] \hline
C_n^{(1)}   & (2,\ldots,2,1)\\[1mm] \hline
D_n^{(1)}   & (1,\ldots,1)\\[1mm] \hline
A_{2n-1}^{(2)} & (1,\ldots,1)\\[1mm] \hline
A_{2n}^{(2)}  & (1,\ldots,1)\\[1mm] \hline
D_{n+1}^{(2)}  & (1,\ldots,1)\\[1mm] \hline
\end{array}
\end{align*} 
\caption{List of $c_r$ \label{tab:c}}
\end{table}

\section{Definitions and perfectness}
\label{sec:perfect}
We follow the notation of~\cite{Kac,FOS:2008}. Let $\mathcal{B}$ be a 
$U_q'(\geh)$-crystal~\cite{Ka:1991}. Denote by $\alpha_i$ and $\La_i$ for $i\in I$ the 
simple roots and fundamental weights and by $c$ the canoncial central element associated to 
$\geh$, where $I$ is the index set of the Dynkin diagram of $\geh$ (see Table~\ref{tab:Dynkin}).
Let $P = \oplus_{i\in I} \Z \La_i$ be the weight lattice of $\geh$ and $P^+$ the set of dominant 
weights. For a positive integer $\ell$, the set of level-$\ell$ weights is
\begin{equation*}
	P^+_\ell = \{ \La \in P^+ \mid \lev(\La)=\ell \}.
\end{equation*}
where $\lev(\La):=\La(c)$. The set of level-$0$ weights is denoted by $P_0$.

We denote by $f_i, e_i: \mathcal{B} \to \mathcal{B} \cup \{\emptyset\}$ for $i\in I$ the Kashiwara 
operators and by $\wt:\mathcal{B}\to P$ the weight function on the crystal. For $b\in \mathcal{B}$ 
we define $\ve_i(b) = \max\{k \mid e_i^k(b) \neq \emptyset \}$, $\vp_i(b) = \max\{k \mid f_i^k(b) \neq 
\emptyset \}$, and
\begin{equation*}
 \ve(b) = \sum_{i\in I} \ve_i(b) \La_i \quad \text{and} \quad 
 \vp(b) = \sum_{i\in I} \vp_i(b) \La_i.
\end{equation*}

Next we define perfect crystals, see for example~\cite{HK:2002}.
\begin{definition} \label{def:perfect}
For a positive integer $\ell > 0$, a crystal $\mathcal{B}$ is called perfect crystal of level $\ell$, if the
following conditions are satisfied:
\begin{enumerate}
\item $\mathcal{B}$ is isomorphic to the crystal graph of a finite-dimensional 
$U_q^{'}(\mathfrak{g})$-module.
\item $\mathcal{B}\otimes \mathcal{B}$ is connected.
\item There exists a $\lambda\in P_0$, such that
$\wt(\mathcal{B}) \subset \lambda + \sum_{i\in I \setminus \{0\}} \mathbb{Z}_{\le 0} \alpha_i$ and there 
is a unique element in $\mathcal{B}$ of classical weight $\la$.
\item $\forall \; b \in \mathcal{B}, \;\; \lev(\varepsilon (b)) \geq \ell$.
\item $\forall \; \La \in P_\ell^{+}$, there exist unique elements
$b_{\La}, b^{\La} \in \mathcal{B}$, such that
$$ \ve ( b_{\La}) = \La = \vp( b^{\La}). $$
\end{enumerate}
\end{definition}

We denote by $\mathcal{B}_{\min}$ the set of minimal elements in $\mathcal{B}$, namely
\begin{equation*}
	\mathcal{B}_{\min} = \{ b\in \mathcal{B} \mid \lev(\ve(b))=\ell \}.
\end{equation*}
Note that condition (5) of Definition~\ref{def:perfect} ensures that $\ve,\vp:\mathcal{B}_{\min}
\to P_\ell^+$ are bijections. They induce an automorphism $\tau = \ve \circ \vp^{-1}$ on
$P_\ell^+$.

\begin{table}
\begin{eqnarray*}
A_n^{(1)}
&\vcenter{\xymatrix@R=1ex{
&&*{\circ}<3pt> \ar@{-}[drr]^<{\;\,0} \ar@{-}[dll] \\
*{\circ}<3pt> \ar@{-}[r]_<{1} &*{\circ}<3pt> \ar@{-}[r]_<{2}
&{} \ar@{.}[r]&{} \ar@{-}[r] &*{\circ}<3pt> \ar@{}[r]_<{n} &{}}}\\ 
B_n^{(1)}
&\vcenter{\xymatrix@R=1ex{
*{\circ}<3pt> \ar@{-}[dr]^<{0} \\
& *{\circ}<3pt> \ar@{-}[r]_<{2}
& {} \ar@{.}[r]&{}  \ar@{-}[r]_>{\,\,\,\,n-1} &
*{\circ}<3pt> \ar@{=}[r] |-{\scalebox{2}{\object@{>}}}& *{\circ}<3pt>\ar@{}_<{n} \\
*{\circ}<3pt> \ar@{-}[ur]_<{1}}}\\ 
C_n^{(1)}
&\vcenter{\xymatrix@R=1ex{
*{\circ}<3pt> \ar@{=}[r] |-{\scalebox{2}{\object@{>}}}_<{0}
&*{\circ}<3pt> \ar@{-}[r]_<{1}
& {} \ar@{.}[r]&{}  \ar@{-}[r]_>{\,\,\,\,n-1} &
*{\circ}<3pt> \ar@{=}[r] |-{\scalebox{2}{\object@{<}}}
& *{\bullet}<3pt>\ar@{}_<{n}}}\\ 
D_n^{(1)}
&\vcenter{\xymatrix@R=1ex{
*{\circ}<3pt> \ar@{-}[dr]^<{0}&&&&&*{\bullet}<3pt> \ar@{-}[dl]^<{n-1}\\
& *{\circ}<3pt> \ar@{-}[r]_<{2}
& {} \ar@{.}[r]&{} \ar@{-}[r]_>{\,\,\,n-2} &
*{\circ}<3pt> & \\
*{\circ}<3pt> \ar@{-}[ur]_<{1}&&&&&
*{\bullet}<3pt> \ar@{-}[ul]_<{n}}}\\ 
A_{2n}^{(2)}
&\vcenter{\xymatrix@R=1ex{
*{\circ}<3pt> \ar@{=}[r] |-{\scalebox{2}{\object@{<}}}_<{0}
&*{\circ}<3pt> \ar@{-}[r]_<{1}
& {} \ar@{.}[r]&{}  \ar@{-}[r]_>{\,\,\,\,n-1} &
*{\circ}<3pt> \ar@{=}[r] |-{\scalebox{2}{\object@{<}}}
& *{\circ}<3pt>\ar@{}_<{n}}}\\ 
A_{2n-1}^{(2)}
&\vcenter{\xymatrix@R=1ex{
*{\circ}<3pt> \ar@{-}[dr]^<{0} \\
& *{\circ}<3pt> \ar@{-}[r]_<{2}
& {} \ar@{.}[r]&{}  \ar@{-}[r]_>{\,\,\,\,n-1} &
*{\circ}<3pt> \ar@{=}[r] |-{\scalebox{2}{\object@{<}}}& *{\circ}<3pt>\ar@{}_<{n} \\
*{\circ}<3pt> \ar@{-}[ur]_<{1}}}\\ 
D_{n+1}^{(2)}
&\vcenter{\xymatrix@R=1ex{
*{\circ}<3pt> \ar@{=}[r] |-{\scalebox{2}{\object@{<}}}_<{0}
&*{\circ}<3pt> \ar@{-}[r]_<{1}
& {} \ar@{.}[r]&{}  \ar@{-}[r]_>{\,\,\,\,n-1} &
*{\circ}<3pt> \ar@{=}[r] |-{\scalebox{2}{\object@{>}}}
& *{\bullet}<3pt>\ar@{}_<{n}}}\\ 
\end{eqnarray*}
\caption{\label{tab:Dynkin}Dynkin diagrams}
\end{table}

In~\cite{S:2008,FOS:2008} $\pm$-diagrams were introduced, which describe the branching 
$X_n\to X_{n-1}$ where $X_n=B_n,C_n,D_n$.
A $\pm$-diagram $P$ of shape $\La/\la$ is a sequence of partitions $\la\subset \mu \subset \La$ 
such that $\La/\mu$ and $\mu/\la$ are horizontal strips (i.e. every column contains at most one box). 
We depict this $\pm$-diagram by the skew tableau of shape $\La/\la$ inwhich the cells of $\mu/\la$ are filled with the symbol $+$ and those of $\La/\mu$ are filled with the symbol $-$. There are further type specific rules which can be
found in~\cite[Section 3.2]{FOS:2008}. There exists a bijection $\bij$ between $\pm$-diagrams and
$X_{n-1}$-highest weight vectors.


\section{Realization of KR-crystals}
\label{sec:realization}
Throughout the paper we use the realization of $B^{r,s}$ as given 
in~\cite{FOS:2008, OS:2008, S:2008}. In this section we briefly recall the main constructions.

\subsection{KR crystals of type $A_{n}^{(1)}$}
Let $\Lambda=\ell_0\Lambda_0+\ell_1\Lambda_1+\cdots+\ell_n \Lambda_n$ be a dominant
weight. Then the level is given by 
\begin{equation*}
\begin{split}
    \lev(\La) &= \ell_0 + \cdots + \ell_n.
\end{split}
\end{equation*}

A combinatorial description of $B^{r,s}$ of type $A_n^{(1)}$ was 
provided by Shimozono~\cite{Sh:2002}. As a $\{1,2,\ldots, n\}$-crystal
\begin{equation*}
	B^{r,s} \cong B(s\La_r).
\end{equation*}
The Dynkin diagram of $A_n^{(1)}$ has a cyclic automorphism $\sigma (i) = i+1 \pmod{n+1}$ which extends to the crystal in form of the promotion operator. The action of the affine crystal operators $f_0$ 
and $e_0$ is given by
\begin{equation*}
	f_0 = \sigma^{-1} \circ f_1 \circ \sigma \qquad \text{and} \qquad e_0 = \sigma^{-1} \circ e_1 \circ \sigma.
\end{equation*}

\subsection{KR crystals of type $D_n^{(1)}$, $B_n^{(1)}$, $A_{2n-1}^{(2)}$}
Let $\Lambda=\ell_0\Lambda_0+\ell_1\Lambda_1+\cdots+\ell_n \Lambda_n$ be a dominant
weight. Then the level is given by
\begin{align*}
	\lev(\La) &= \ell_0+\ell_1+2\ell_2+2\ell_3+\cdots+2\ell_{n-2}+\ell_{n-1}+\ell_n && \text{for type $D_n^{(1)}$}\\
	\lev(\La) &= \ell_0+\ell_1+2\ell_2+2\ell_3+\cdots+2\ell_{n-2}+2\ell_{n-1}+\ell_n && \text{for type $B_n^{(1)}$}\\
	\lev(\La) &= \ell_0+\ell_1+2\ell_2+2\ell_3+\cdots+2\ell_{n-2}+2\ell_{n-1}+2\ell_n && \text{for type $A_{2n-1}^{(2)}$.}
\end{align*}
We have the following realization of $B^{r,s}$. Let $X_n=D_n,B_n,C_n$ be the classical subalgebra for
$D_n^{(1)}$, $B_n^{(1)}$, $A_{2n-1}^{(2)}$, respectively.
\begin{definition} \label{def:DBA}
Let $1\le r\le n-2$ for type $D_n^{(1)}$, $1\le r\le n-1$ for type $B_n^{(1)}$, and 
$1\le r\le n$ for type $A_{2n-1}^{(2)}$. Then $B^{r,s}$ is defined as follows. As an $X_n$-crystal
\begin{equation} \label{eq:V DBA decomp}
	B^{r,s} \cong \bigoplus_\La B(\La),
\end{equation}
where the sum runs over all dominant weights $\La$ that can be obtained from $s\La_r$ by the 
removal of vertical dominoes. The affine crystal operators $e_{0}$ and $f_{0}$ are defined as
\begin{equation} \label{eq:e0}
f_{0} = \sigma \circ f_{1} \circ \sigma \quad \text{and} \quad e_{0} = \sigma \circ e_{1} \circ \sigma,
\end{equation}
where $\sigma$ is the crystal automorphism defined in~\cite[Definition 4.2]{S:2008}.
\end{definition}

\begin{definition}
\label{def:Vn B}
Let $B_{A_{2n-1}^{(2)}}^{n,s}$ be the $A_{2n-1}^{(2)}$-KR crystal. Then $B^{n,s}$ of type $B_n^{(1)}$
is defined through the unique injective map $S:B^{n,s}\rightarrow B_{A_{2n-1}^{(2)}}^{n,s}$ such that 
\[
S(e_ib)=e_i^{m_i}S(b),\quad S(f_ib)=f_i^{m_i}S(b)\quad\text{for }i\in I,
\]
where $(m_i)_{0\le i\le n}=(2,2,\ldots,2,1)$. 
\end{definition}

In addition, the $\pm$-diagrams of $A_{2n-1}^{(2)}$ that occur in the image are precisely those 
which can be obtained by doubling a $\pm$-diagram of $B^{n,s}$ (see \cite[Lemma 3.5]{FOS:2008}). 
$S$ induces an embedding of dominant weights of $B_{n}^{(1)}$ into dominant weights of 
$A_{2n-1}^{(2)}$, namely $S(\La_i)=m_i\La_i$. It is easy to see that for any $\La\in P^+$ we have
$\lev(S(\Lambda)) = 2\, \lev(\Lambda)$.

For the definition of $B^{n,s}$ and $B^{n-1,s}$ of type $D_n^{(1)}$, see for 
example~\cite[Section 6.2]{FOS:2008}.

\subsection{KR crystal of type $C_n^{(1)}$}
The level of a dominant $C_n^{(1)}$ weight $\La = \ell_0 \La_0 + \cdots + \ell_n \La_n$
is given by
\begin{equation*}
\begin{split}
    \lev(\La) &= \ell_0 + \cdots + \ell_n.
\end{split}
\end{equation*}

We use the realization of $B^{r,s}$ as the fixed point set of the automorphism $\sigma$ 
\cite[Definition 4.2]{S:2008} (see Definition~\ref{def:DBA}) inside  
$B_{A_{2n+1}^{(2)}}^{r,s}$ of~\cite[Theorem 5.7]{FOS:2008}.
\begin{definition} \label{def:KR C}
For $1\le r<n$, the KR crystal $B^{r,s}$ of type $C_n^{(1)}$ is defined to be the fixed point set 
under $\sigma$ inside $B_{A_{2n+1}^{(2)}}^{r,s}$ with the operators
\begin{equation*}
    e_i = \begin{cases} e_0 e_1 & \text{for $i=0$,}\\
                            e_{i+1}   & \text{for $1\le i\le n$,}
              \end{cases}
\end{equation*}
where the Kashiwara operators on the right act in $B_{A_{2n+1}^{(2)}}^{r,s}$.
Under the crystal embedding $S: B^{r,s} \to B_{A_{2n+1}^{(2)}}^{r,s}$ we have
\begin{equation*}
    \La_i \mapsto \begin{cases} \La_0 + \La_1 & \text{for $i=0$,}\\
                                    \La_{i+1}     & \text{for $1\le i\le n$.}
                      \end{cases}
\end{equation*}
\end{definition}
Under the embedding $S$, the level of $\La\in P^+$ doubles, that is $\lev(S(\La))=2\, \lev(\La)$.


For $B^{n,s}$ of type $C_n^{(1)}$ we refer to~\cite[Section 6.1]{FOS:2008}.

\subsection{KR crystals of type $A_{2n}^{(2)}$, $D_{n+1}^{(2)}$}
Let $\Lambda=\ell_0\Lambda_0+\ell_1\Lambda_1+\cdots+\ell_n \Lambda_n$ be a dominant
weight. The level is given by
\begin{align*}
  	\lev(\La) &= \ell_0+2 \ell_1+2\ell_2+\cdots+2\ell_{n-2}+2\ell_{n-1}+2\ell_n && \text{for type } A_{2n}^{(2)}\\
		\lev(\La) &= \ell_0+2\ell_1+2\ell_2+\cdots+2\ell_{n-2}+2\ell_{n-1}+\ell_n && \text{for type }D_{n+1}^{(2)}.
\end{align*}	
Define positive integers $m_i$ for $i\in I$ as follows:
\begin{equation}
\label{eq:m}
(m_0,m_1,\ldots,m_{n-1},m_n) = \begin{cases}
      (1,2,\ldots,2,2) & \text{ for }A_{2n}^{(2)},\\
      (1,2,\ldots,2,1) & \text{ for }D_{n+1}^{(2)}.
\end{cases}
\end{equation}
Then $B^{r,s}$ can be realized as follows.
\begin{definition}
\label{def:embedding}
For $1\le r\le n$ for $\geh=A_{2n}^{(2)}$, $1\le r<n$ for $\geh=D_{n+1}^{(2)}$ and $s\ge1$,
there exists a unique injective map
$S:B^{r,s}_\geh\longrightarrow B_{C_n^{(1)}}^{r,2s}$ such that
\[
 S(e_ib)=e_i^{m_i}S(b),\quad S(f_ib)=f_i^{m_i}S(b) \qquad \text{for $i \in I$.}
\]
\end{definition}
The $\pm$-diagrams of $C_{n}^{(1)}$ that occur in the image of $S$ are precisely those which can be obtained by doubling a $\pm$-diagram of $B^{r,s}$ (see \cite[Lemma 3.5]{FOS:2008}).
$S$ induces an embedding of dominant weights for $A_{2n}^{(2)},D_{n+1}^{(2)}$ into dominant weights of type $C_{n}^{(1)}$, with $S(\Lambda_i) = m_i \Lambda_i$. This map preserves the level of a weight,
that is $\lev(S(\La))=\lev(\La)$.

For the case $r = n$ of type $D_{n+1}^{(2)}$ we refer to \cite[Definition 6.2]{FOS:2008}.

\section{Proof of Theorem~\ref{thm:perfect}}
\label{sec:proof}
For type $A_n^{(1)}$, perfectness of $B^{r,s}$ was proven in~\cite{KMN2:1992}. For all other types,
in the case that $\frac{s}{c_r}$ is an integer, we need to show that the 5 defining conditions in 
Definition~\ref{def:perfect} are satisfied:
\begin{enumerate}
\item This was recently shown in \cite{OS:2008}.
\item This follows from~\cite[Corollary 6.1]{FSS:2007} under~\cite[Assumption 1]{FSS:2007}.
Assumption 1 is satisfied except for type $A_{2n}^{(2)}$: The regularity of $B^{r,s}$ is ensured by (1),
the existence of an automorphism $\sigma$ was proven in~\cite[Section 7]{FOS:2008}, and
the unique element $u\in B^{r,s}$ such that $\ve(u)=s\La_0$ and $\vp(u)=s\La_\nu$ 
(where  $\nu=1$ for $r$ odd for types $B_n^{(1)}$, $D_n^{(1)}$, $A_{2n-1}^{(2)}$,
$\nu=r$ for $A_n^{(1)}$, and $\nu=0$ otherwise) is given by the classically 
highest weight element in the component $B(0)$ for $\nu=0$, $B(s\La_1)$ for 
$\nu=1$, and $B(s\La_r)$ for $\nu=r$. Note that $\La_0=\tau(\La_\nu)$, where $\tau=\ve\circ \vp^{-1}$.
For type $A_{2n}^{(2)}$, perfectness follows from~\cite{NS:2006}.
\item The statement is true for $\lambda = s (\Lambda_r-\Lambda_r(c) \La_0)$, which follows from 
the decomposition formulas~\cite{Chari:2001,H:2006,H:2007,Nakajima:2003}. 
\end{enumerate}
Conditions (4) and (5) will be shown in the following subsections using case by case
considerations: Section~\ref{subsec:A} for type $A_n^{(1)}$, Sections~\ref{subsec:BDA},
\ref{subsec:Dn}, and~\ref{subsec:Bn} for types $B_n^{(1)}$, $D_n^{(1)}$, $A_{2n-1}^{(2)}$,
Sections~\ref{subsec:C} and~\ref{subsec:Cn} for type $C_n^{(1)}$, Section~\ref{subsec:A2}
for type $A_{2n}^{(2)}$, and Sections~\ref{subsec:D2} and~\ref{subsec:D2n} for type
$D_{n+1}^{(2)}$.

When $\frac{s}{c_r}$ is not an integer, we show in the subsequent sections
that the minimum of the level of $\ve(b)$ is the smallest integer exceeding $\frac{s}{c_r}$, and 
provide examples that contradict condition (5) of Definition~\ref{def:perfect} for each crystal,
thereby proving that $B^{r,s}$ is not perfect. In the case that $\frac{s}{c_r}$ is an integer,
we provide an explicit construction of the minimal elements of $B^{r,s}$.

\subsection{Type $A_n^{(1)}$}
\label{subsec:A}
It was already proven in~\cite{KMN2:1992} that $B^{r,s}$ is perfect.
We give below its associated automorphism $\tau$ and minimal elements. 
$\tau$ on $P$ is defined by
\[
\tau(\sum_{i=0}^nk_i\La_i)=\sum_{i=0}^nk_i\La_{i-r\,\mathrm{mod}\,n+1}.
\]
Recall that $B^{r,s}$ is identified with the set of
semistandard tableaux of $r\times s$ rectangular shape over the alphabet
$\{1,2,\ldots,n+1\}$. For $b\in B^{r,s}$ let $x_{ij}=x_{ij}(b)$ denote the number of
letters $j$ in the $i$-th row of $b$ for $1\le i\le r,1\le j\le n+1$. Set $r'=n+1-r$, then 
\[
x_{ij}=0\quad\text{unless}\quad i\le j\le i+r'.
\]
Let $\La=\sum_{i=0}^n\ell_i\La_i$ be
in $P^+_s$, that is, $\ell_0,\ell_1,\ldots,\ell_n\in\Z_{\ge0},\sum_{i=0}^n\ell_i=s$.
Then $x_{ij}(b)$ of the minimal element $b$ such that $\ve(b)=\La$ is
given by
\begin{equation}
\label{eq:min elem A}
\begin{split}
x_{ii}&=\ell_0+\sum_{\alpha=i}^{r-1}\ell_{\alpha+r'},\\
x_{ij}&=\ell_{j-i}\quad(i<j<i+r'),\\
x_{i,i+r'}&=\sum_{\alpha=0}^{i-1}\ell_{\alpha+r'}
\end{split}
\end{equation}
for $1\le i\le r$.

\subsection{Types $B_n^{(1)}$, $D_n^{(1)}$, $A_{2n-1}^{(2)}$}
\label{subsec:BDA}
Conditions (4) and (5) of Definition~\ref{def:perfect} for $1\le r\le n-2$ for type $D_n^{(1)}$,
$1\le r\le n-1$ for type $B_n^{(1)}$, and $1\le r\le n$ for type $A_{2n-1}^{(2)}$ were shown
in~\cite[Section 6]{S:2008}. We briefly review the construction of the minimal elements here since
they are important in the construction of the minimal elements for type $C_n^{(1)}$.

To a given fundamental weight $\Lambda_k$ we may associate the following $\pm$-diagram
\begin{equation} \label{eq:La pm}
\diagram: \Lambda_k \mapsto \begin{cases}
	\phantom{k+1\left\{\right.} \emptyset & \text{if $r$ is even and $k=0$}\\[1mm]
	\phantom{k+1\left\{\right.} \young(-,+) & \text{if $r$ is even and $k=1$}\\[3mm]
	\phantom{k+1\left\{\right.} \young(+) & \text{if $r$ is odd and $k=0$}\\[2mm]
	\phantom{k+1\left\{\right.} \young(-) & \text{if $r$ is odd and $k=1$}\\[2mm]
	k+1\left\{\young(-,+,\mb\mb,\mb\mb)\right. 
		& \text{if $k\not \equiv r \bmod{2}$ and $2\le k\le r$}\\[8mm]
	\qquad k\left\{\young(+-,\mb\mb,\mb\mb,\mb\mb)\right.
		& \text{if $k \equiv r \bmod{2}$ and $2\le k\le r$}\\[8mm]
	\qquad r\left\{\young(\mb\mb,\mb\mb,\mb\mb,\mb\mb)\right.
		& \begin{array}{l} \text{if $r<k\le n-2$ for type $D_n^{(1)}$}\\
		                                \text{if $r<k\le n-1$ for type $B_n^{(1)}$}\\
		                                \text{if $r<k\le n$ for type $A_{2n-1}^{(2)}$} \end{array} \\[8mm]
	\qquad r\left\{\young(\mb,\mb,\mb,\mb)\right. 
		& \begin{array}{l} \text{if $k=n-1,n$ for type $D_n^{(1)}$}\\
		                                \text{if $k=n$ for type $B_n^{(1)}$.} \end{array}
\end{cases}
\end{equation}
This map can be extended to any dominant weight $\Lambda=\ell_0\Lambda_0+\cdots+
\ell_n\Lambda_n$ by concatenating the columns of the $\pm$-diagrams of each piece. 

To every fundamental weight $\Lambda_k$ we also associate a string of operators $f_i$ with 
$i\in\{2,3,\ldots,n\}$ as follows. Let $T(\Lambda_k)$ be the tableau assigned to $\Lambda_k$ as
\begin{equation*}
T(\Lambda_k) = \begin{cases}
	\quad u & \text{if $r$ is even and $k=0$}\\[1mm]
	\quad \young(\bb,2) & \text{if $r$ is even and $k=1$}\\[4mm]
	\quad \young(1) & \text{if $r$ is odd and $k=0$}\\[1mm]
	\quad \young(\ab) & \text{if $r$ is odd and $k=1$}\\[2mm]
	\quad \begin{array}{|c|c|} \cline{1-1} \overline{k+1} & \multicolumn{1}{c}{} \\ \cline{1-1} 
                                        k+1 & \multicolumn{1}{c}{}\\ \hline k&\bar{2}\\
                                        \hline \vdots & \vdots\\ \hline 2& \bar{k}\\
                                         \hline \end{array} & \text{if $2\le k\le r$ and $k\not \equiv r\bmod 2$} \\[1.4cm]
          \quad \begin{array}{|c|c|} \hline k & \ab\\ \hline \vdots & \vdots\\[2mm] \hline 1& \bar{k}\\
                                         \hline \end{array} & \text{if $2\le k\le r$ and $k\equiv r\bmod 2$} \\[1cm]
	\quad \begin{array}{|c|c|} \hline k&\overline{k-r+1}\\ \hline \vdots&\vdots\\ \hline
                                        k-r+1 & \overline{k} \\ \hline \end{array} 
                     & \begin{array}{l} \text{if $r<k\le n-2$ for type $D_n^{(1)}$}\\
                     			      \text{if $r<k\le n-1$ for type $B_n^{(1)}$}\\
			      		      \text{if $r<k\le n$ for type $A_{2n-1}^{(2)}$} \end{array} \\[0.9cm]
          \quad \left.\begin{array}{|c|} \hline \vdots \\ \hline n\\ \hline 
                                         \overline{n}\\ \hline n \\ \hline\end{array} \right\} r 
                     & \text{for $k=n-1$ for type $D_n^{(1)}$}\\[1.1cm]
          \quad \text{previous case with $n\leftrightarrow \bar{n}$}
                & \text{for $k=n$ for type $D_n^{(1)}$}\\[.2cm]
           \quad \left.\begin{array}{|c|} \hline 0 \\ \hline \vdots \\ \hline 0 \\ \hline \end{array} \right\} r 
                     & \text{for $k=n$ for type $B_n^{(1)}$}
\end{cases}
\end{equation*}
Then $f(\Lambda_k)$ for $0\le k\le n$ is defined such that
$T(\Lambda_k) = f(\Lambda_k) \bij(\diagram(\Lambda_k))$, where $\bij$ is the bijection between $\pm$-diagrams and $X_{n-1}$-highest weight elements 
(see~\cite{S:2008,FOS:2008}). Note that in fact $f(\Lambda_0)=f(\Lambda_1)=1$.

The minimal element $b$ in $B^{r,s}$ that satisfies $\varepsilon(b)=\Lambda$ can now be
constructed as follows
\begin{equation*}
	b = f(\Lambda_n)^{\ell_n} \cdots  f(\Lambda_2)^{\ell_2} \bij(\diagram(\Lambda)).
\end{equation*}

{}From the condition that $\wt(b)=\vp(b)-\ve(b)$ it is not hard to see that $\vp(b)=\ve(b)$
for $b\in B_{\min}^{r,s}$ and $r$ even. For $r$ odd, we have $\vp(b)=\sigma \circ \sigma' \circ \ve(b)$
for $b\in B_{\min}^{r,s}$, where $\sigma$ is the Dynkin diagram automorphism interchanging nodes
$0$ and $1$, $\sigma'$ is the Dynkin diagram automorphism interchanging nodes $n-1$ and $n$ 
for type $D_n^{(1)}$, and $\sigma'$ is the identity for type $B_n^{(1)}$ and $A_{2n-1}^{(2)}$. Hence, 
for $\La=\sum_{i=0}^n\ell_i\La_i\in P_s^+$, we have
\begin{equation*}
\tau(\La)=\begin{cases}
	\La & \text{if $r$ is even},\\
 	\ell_0\La_1+\ell_1\La_0+\sum_{i=2}^n\ell_i\La_i
	 & \text{if $r$ is odd,}\\
	 & \text{\quad types $B_n^{(1)}, A_{2n-1}^{(2)}$},\\
	\ell_0\La_1+\ell_1\La_0+\sum_{i=2}^{n-2}\ell_i\La_i+\ell_{n-1}\La_n+\ell_n\La_{n-1}
	 &\text{if $r$ is odd, type $D_n^{(1)}$}.
\end{cases}
\end{equation*}

\subsection{Type $D_n^{(1)}$ for $r=n-1,n$}
\label{subsec:Dn}
The cases when $r=n,n-1$ for type $D_n^{(1)}$ were treated in~\cite{KMN2:1992}. We will give 
the minimal elements below.
Since $B^{n,s}$ and $B^{n-1,s}$ are related via the Dynkin diagram automorphism interchanging 
$\La_n$ and $\La_{n-1}$, we only deal with $B^{n,s}$. 
As a $D_n$-crystal it is isomorphic to $B(s\La_n)$. There is a description of an element in terms
of semistandard tableau of $n\times s$ rectangular shape with letters from the alphabet $\mathcal{A}
=\{1,2,\ldots,n,\ol{n},\ldots,\ol{1}\}$ with partial order
\[
1<2<\cdots<n-1<{n\atop\ol{n}}<\ol{n-1}<\cdots<\ol{1}.
\]
Moreover, each column does not contain both $k$ and $\ol{k}$.
Let $c_i$ be the $i$th column. Then the number of barred letters in $c_i$ is even, and
the action of $e_i,f_i$ ($i=1,\ldots,n$) is calculated through that of
$c_s\otimes\cdots\otimes c_1$ of $B(\La_n)^{\otimes s}$. With this realization the minimal element 
$b_{\La}$ such that $\ve(b_{\La})=\La=\sum_{i=0}^n\ell_i\La_i$ ($\ell_i\in\Z_{\ge0},
\lev(\La)=s$) is given as follows. 
Let $x_{ij}$ ($1\le i\le n,j\in\mathcal{A}$) be the number of $j$ 
in the $i$th row. $x_{ij}=0$ unless $i\le j\le\ol{n-i+1}$. The other $x_{ij}$ of $b_{\La}$
is given by 
\begin{align*}
&x_{11}=\ell_0+\ell_2+\ell_3+\cdots+\ell_{n-2}+
\begin{cases} \ell_{n-1} & \text{for $n$ even,}\\
                          \ell_n & \text{for $n$ odd,} \end{cases}\\                          
&x_{1j}=\ell_{j-1}\;(2\le j\le n-1),\quad
(x_{1n},x_{1\ol{n}})=\begin{cases} (0,\ell_n) & \text{for $n$ even,}\\
                                                              (\ell_{n-1},0) & \text{for $n$ odd,} \end{cases}
\end{align*}
if $2\le i\le n-1$,
\begin{align*}
&x_{ii}=\ell_0+\ell_2+\ell_3+\cdots+\ell_{n-i},
  \quad x_{ij}=\ell_{j-i}\;(i+1\le j\le n-1),\\
&(x_{in},x_{i\ol{n}})=\begin{cases} (\ell_{n-i}+\ell_{n-i+1},0) & \text{$n-i$ even,}\\ 
                                                             (0,\ell_{n-i}+\ell_{n-i+1}) & \text{$n-i$ odd,}
\end{cases}\\
&x_{i\,\ol{j}}=\ell_{2n+1-i-j}\;(n-i+3\le j\le n-1),\quad
x_{i\,\ol{n-i+2}}=\begin{cases} \ell_{n-1} & \text{$n$ even,}\\  \ell_n & \text{$n$ odd,}
\end{cases}\\
&x_{i\,\ol{n-i+1}}=\ell_{n-i+1}+ \ell_{n-i+2} +\cdots+\ell_{n-2}+
\begin{cases} \ell_n & \text{$n$ even,}\\ \ell_{n-1} & \text{$n$ odd,}
\end{cases}
\end{align*}
and
\begin{align*}
&x_{nn}=\ell_0,\quad x_{n\ol{n}}=0,\quad x_{n\ol{j}}=\ell_{n+1-j}\;(3\le j\le n-1),\\
&x_{n\ol{2}}=\begin{cases} \ell_{n-1} & \text{$n$ even,}\\ \ell_n & \text{$n$ odd,}
\end{cases}
\quad
x_{n\ol{1}}=\ell_1+ \ell_2 +\cdots+\ell_{n-2}+\begin{cases} 
\ell_n & \text{$n$ even,}\\ \ell_{n-1} & \text{$n$ odd.} \end{cases}
\end{align*}
The automorphism $\tau$ is given by
\[
\tau\bigl(\sum_{i=0}^n\ell_i\La_i\bigr)=\ell_0\La_{n-1}+\ell_1\La_n+
\sum_{i=2}^{n-2}\ell_i\La_{n-i}+\begin{cases}
\ell_{n-1}\La_0+\ell_n\La_1 & \text{$n$ even,}\\
\ell_{n-1}\La_1+\ell_n\La_0 & \text{$n$ odd.} \end{cases}
\]

\subsection{Type $B_n^{(1)}$ for $r=n$}
\label{subsec:Bn}
In this section we consider the perfectness of $B^{n,s}$ of type $B_n^{(1)}$.
\begin{prop} \label{prop:bounds B}
We have
\begin{equation*}
\begin{split}
	&\min \{ \lev(\ve(b)) \mid b \in B^{n,2s+1} \} \geq s+1,\\
	&\min \{ \lev(\ve(b)) \mid b \in B^{n,2s} \} \geq s.
\end{split}
\end{equation*}
\end{prop}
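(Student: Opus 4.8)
The plan is to deduce both inequalities from the already-established perfectness of the type $A_{2n-1}^{(2)}$ crystal, transported through the embedding $S$ of Definition~\ref{def:Vn B}. Recall that $S\colon B^{n,t}\to B_{A_{2n-1}^{(2)}}^{n,t}$ satisfies $S(e_ib)=e_i^{m_i}S(b)$ and $S(f_ib)=f_i^{m_i}S(b)$ with $(m_i)_{0\le i\le n}=(2,\ldots,2,1)$. Since $c_n=1$ for type $A_{2n-1}^{(2)}$, the quotient $t/c_n=t$ is an integer, so $B_{A_{2n-1}^{(2)}}^{n,t}$ is perfect of level $t$ by the results reviewed in Section~\ref{subsec:BDA} (from~\cite{S:2008}). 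In particular condition~(4) of Definition~\ref{def:perfect} gives $\lev(\ve(b'))\ge t$ for every $b'\in B_{A_{2n-1}^{(2)}}^{n,t}$, and this is the bound I would pull back to $B^{n,t}$.

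The central step is the identity $\lev(\ve(S(b)))=2\,\lev(\ve(b))$ for every $b\in B^{n,t}$. First I would show $\ve_i(S(b))=m_i\,\ve_i(b)$ for all $i\in I$. The intertwining relations, applied to the top of the $i$-string through $b$, give $S(e_i^{\ve_i(b)}b)=e_i^{m_i\ve_i(b)}S(b)\ne\emptyset$ while $e_i^{m_i(\ve_i(b)+1)}S(b)=\emptyset$, whence $m_i\ve_i(b)\le\ve_i(S(b))\le m_i\ve_i(b)+m_i-1$. The missing input is that the image of $S$ consists of \emph{aligned} elements, that is $m_i\mid\ve_i(S(b))$, which forces $\ve_i(S(b))=m_i\ve_i(b)$. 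Granting this, $\ve(S(b))=\sum_{i\in I}m_i\ve_i(b)\La_i=S(\ve(b))$, and the level-doubling $\lev(S(\La))=2\,\lev(\La)$ recorded after Definition~\ref{def:Vn B} yields $\lev(\ve(S(b)))=\lev(S(\ve(b)))=2\,\lev(\ve(b))$.

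Combining the two steps, every $b\in B^{n,t}$ satisfies $\lev(\ve(b))=\tfrac12\lev(\ve(S(b)))\ge t/2$. Since $\lev(\ve(b))=\sum_{i\in I}\ve_i(b)\La_i(c)$ is a non-negative integer, setting $t=2s+1$ gives $\lev(\ve(b))\ge s+\tfrac12$ and hence $\lev(\ve(b))\ge s+1$, while setting $t=2s$ gives $\lev(\ve(b))\ge s$. These are precisely the two asserted bounds.

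The main obstacle is the alignment claim $m_i\mid\ve_i(S(b))$: the intertwiners by themselves only localize $\ve_i(S(b))$ to within $m_i-1$ of $m_i\ve_i(b)$, and the inequality one gets for free, $\ve_i(S(b))\ge m_i\ve_i(b)$, points the wrong way for a \emph{lower} bound on $\lev(\ve(b))$ (it would only yield the far weaker $\lev(\ve(b))\ge t/2-(n-1)$). I would secure alignment from the explicit description of the image of $S$ in terms of doubled $\pm$-diagrams given after Definition~\ref{def:Vn B} (cf.~\cite[Lemma 3.5]{FOS:2008}), which is exactly the virtual-crystal structure underlying the embedding; the rest is bookkeeping with the level form.
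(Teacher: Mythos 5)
Your proposal is correct and takes essentially the same route as the paper: the paper's proof likewise pulls the level bound back through the embedding $S$ of Definition~\ref{def:Vn B}, arguing that an element $b\in B^{n,t}$ with $\lev(\ve(b))$ too small would produce $\tilde{b}=S(b)\in B_{A_{2n-1}^{(2)}}^{n,t}$ whose $\ve$ has level $<t$, contradicting the known perfectness (of level $t$) of $B_{A_{2n-1}^{(2)}}^{n,t}$. The only difference is one of detail: the paper leaves implicit the identity $\lev(\ve(S(b)))=2\,\lev(\ve(b))$, i.e.\ the alignment property $\ve_i(S(b))=m_i\ve_i(b)$ of the virtual crystal image, which you rightly isolate as the key step and justify via the doubled $\pm$-diagram description of the image of $S$.
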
 

\begin{proof}
Suppose, there exists an element $b \in B^{n,2s+1}$ with $\lev(\varepsilon(b)) = p < s+1$. Since 
$B^{n,2s+1}$ is embedded into $B_{A_{2n-1}^{(2)}}^{n,2s+1}$ by Definition~\ref{def:Vn B}, this 
would yield an element $\tilde{b}\in B_{A_{2n-1}^{(2)}}^{n,2s+1}$ with $\lev(\tilde{b}) < 2s+1$. But this is 
not possible, since $B_{A_{2n-1}^{(2)}}^{n,2s+1}$ is a perfect crystal of level $2s +1$.

Suppose there exists an element $b \in B^{n,2s}$ with $\lev(\varepsilon(b)) = p < s$. By the same argument one obtains a contradiction to the level of $B_{A_{2n-1}^{(2)}}^{n,2s}$.
\end{proof}

Hence to show that $B^{n,2s+1}$ is not perfect, it is enough to provide two elements $b_1, b_2 \in 
B_{A_{2n-1}^{(2)}}^{n,2s+1}$  which are in the realization of $B^{r,s}$ under $S$ and satisfy 
$\ve(b_1) = \ve(b_2) = \Lambda$, where $\lev(\Lambda) = 2s+2$.

\begin{prop} \label{prop:B nonperfect}
Define the following elements $b_1, b_2 \in B_{A_{2n-1}^{(2)}}^{n,2s+1}$: For $n$ odd,
let $P_1$ be the $\pm$-diagram corresponding to one column of height $n$ with a $+$, and 
$2s$ columns of height $1$ with $-$ signs, and $P_2$ the analogous $\pm$-diagram but with 
a $-$ in the column of height $n$. Set 
$\vec{a} = (n (n-1)^2 n (n-2)^2 (n-1)^2 n \ldots 2^2\ldots (n-1)^2 n)$ and
\begin{equation*}
	b_1 = f_{\vec{a}}(\bij(P_1)) \quad \text{and} \quad b_2 = f_{\vec{a}}(\bij(P_2)).
\end{equation*} 
For $n$ even, replace the columns of height $1$ with columns of height $2$ and fill them with 
$\pm$-pairs.
Then $b_1, b_2 \in S(B^{n,2s+1})$ and $\ve(b_1) = \ve(b_2) = 2s \Lambda_1 + \Lambda_n$, which 
is of level $2s+2$.
\end{prop}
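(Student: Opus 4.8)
The plan is to establish the two assertions separately: first that $b_1,b_2$ lie in the image $S(B^{n,2s+1})$, and then that both satisfy $\ve(b_1)=\ve(b_2)=2s\La_1+\La_n$; non-perfectness then follows from the discussion preceding the proposition, once we also record $b_1\neq b_2$. The structural observation driving everything is that the string $\vec{a}$ is built so that each of its blocks is a power $f_i^{m_i}$ with $(m_i)_{0\le i\le n}=(2,\ldots,2,1)$: the letter $n$ occurs singly ($m_n=1$) while every letter $i<n$ occurs squared ($m_i=2$). Thus $f_{\vec{a}}$ is a composite of the stretched operators $f_i^{m_i}$ that intertwine $S$ by Definition~\ref{def:Vn B}.

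For membership I would first check that the $\pm$-diagrams $P_1,P_2$ are doublings of $\pm$-diagrams of $B^{n,2s+1}$ of type $B_n^{(1)}$; by \cite[Lemma 3.5]{FOS:2008} and the discussion following Definition~\ref{def:Vn B} this places $\bij(P_1),\bij(P_2)$ in $S(B^{n,2s+1})$. This is precisely where the parity of $n$ enters: for $n$ even the height-$1$ columns must be replaced by height-$2$ columns filled with $\pm$-pairs so that the diagram is a genuine doubling. Writing $\bij(P_i)=S(\beta_i)$, the intertwining relation together with the similarity identities $\ve_i(S(b))=m_i\,\ve_i(b)$ and $\vp_i(S(b))=m_i\,\vp_i(b)$ shows that $S(B^{n,2s+1})$ is stable under each $f_i^{m_i}$. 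Applying the blocks of $\vec{a}$ one at a time then gives $b_i=f_{\vec{a}}\,S(\beta_i)=S\bigl(f_{\vec{a}'}\beta_i\bigr)$, where $\vec{a}'$ is obtained from $\vec{a}$ by replacing each block $f_i^{m_i}$ with the single operator $f_i$; in particular $b_i\in S(B^{n,2s+1})$.

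To compute $\ve(b_i)$ I would work directly in $B_{A_{2n-1}^{(2)}}^{n,2s+1}$, realized as a $C_n$-crystal of tableaux. The letters $2,\ldots,n$ of $\vec{a}$ keep $b_i$ inside the $C_{n-1}$-component through $\bij(P_i)$, so the classical values $\ve_j(b_i)$ for $1\le j\le n$ can be read off from the signature rule once the explicit tableau $f_{\vec{a}}\,\bij(P_i)$ has been computed; the only affine value $\ve_0(b_i)$ is then extracted from $e_0=\sigma\circ e_1\circ\sigma$ of~\eqref{eq:e0}. The target is $\ve_0=0$, $\ve_1=2s$, $\ve_j=0$ for $2\le j\le n-1$, and $\ve_n=1$, which yields $\ve(b_i)=2s\La_1+\La_n$ of level $2s+2$. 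As a cross-check, and as a route that shortens the combinatorics, one may instead pass to the preimage $c_i\in B^{n,2s+1}$ with $b_i=S(c_i)$ and use $\ve_j(b_i)=m_j\,\ve_j(c_i)$, reducing the claim to $\ve(c_i)=s\La_1+\La_n$ on the $B_n^{(1)}$ side (consistent with $S(\La_i)=m_i\La_i$ and $\lev(S(\La))=2\,\lev(\La)$, giving level $s+1$ there, which matches the bound of Proposition~\ref{prop:bounds B}).

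The main obstacle I anticipate is the explicit evaluation of the long nested string $\vec{a}=(n,(n-1)^2,n,(n-2)^2,(n-1)^2,n,\ldots,2^2,\ldots,(n-1)^2,n)$ on $\bij(P_1)$ and $\bij(P_2)$: one must write these highest-weight tableaux down and then push the cascading blocks through the Kashiwara operators without losing track of the resulting filling, all while maintaining the two parity cases for $n$ and handling the affine node $0$ via $\sigma$ rather than the signature rule. By contrast, distinctness of $b_1$ and $b_2$ is easy: the height-$n$ column carries a $+$ in $P_1$ and a $-$ in $P_2$, so $\wt(b_1)\neq\wt(b_2)$ even though $\ve(b_1)=\ve(b_2)$. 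This is exactly the non-uniqueness forbidden by condition~(5) of Definition~\ref{def:perfect}, so $B^{n,2s+1}$ is not perfect.
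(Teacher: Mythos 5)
Your proposal is correct and follows essentially the same route as the paper's proof: membership in $S(B^{n,2s+1})$ is deduced from the fact that $P_1,P_2$ are doubled $B_n^{(1)}$ $\pm$-diagrams and that $\vec{a}$ is a doubled sequence with respect to $(m_i)=(2,\ldots,2,1)$, after which $\ve(b_1)=\ve(b_2)=2s\La_1+\La_n$ is verified by explicit computation (which the paper likewise leaves as a direct check). Your added remarks --- the stability of the image of $S$ under the blocks $f_i^{m_i}$, the cross-check via the preimages $c_i$ with $\ve(c_i)=s\La_1+\La_n$, and the explicit note that $b_1\neq b_2$ --- are consistent elaborations of the same argument rather than a different method.
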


\begin{proof}
It is clear from the construction that the $\pm$-diagrams corresponding to $b_1$ and $b_2$ can 
be obtained by doubling a $B_{n}^{(1)}$ $\pm$-diagram (see~\cite[Lemma 3.5]{FOS:2008}). 
Hence $\bij(P_1), \bij(P_2) \in S(B^{n,2s+1})$. The sequence $\vec{a}$ can be obtained by doubling
a type $B_n^{(1)}$ sequence using $(m_1,m_2,\ldots,m_n)= (2,\ldots,2,1)$, so by 
Definition~\ref{def:Vn B} $b_1$ and $b_2$ are in the image of the embedding $S$ that realizes 
$B^{n, 2s+1}$. The claim that $\ve(b_1) = \ve(b_2) = 2s \Lambda_1 + \Lambda_n$ can be checked 
explicitly.
\end{proof}

\begin{corollary}
The KR crystal $B^{n,2s+1}$ of type $B_n^{(1)}$ is not perfect.
\end{corollary}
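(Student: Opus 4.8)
The plan is to combine Proposition~\ref{prop:bounds B} with Proposition~\ref{prop:B nonperfect} to violate condition (5) of Definition~\ref{def:perfect}, which is exactly what disqualifies $B^{n,2s+1}$ from being perfect. Recall that for $B^{n,s}$ of type $B_n^{(1)}$ the relevant value is $c_n = 2$, so $B^{n,2s+1}$ is the odd case where $\frac{s}{c_n} = \frac{2s+1}{2}$ is \emph{not} an integer, and the conjecture predicts non-perfectness. First I would pin down what level $\ell$ would have to be if $B^{n,2s+1}$ were perfect. By Proposition~\ref{prop:bounds B} we have $\lev(\ve(b)) \ge s+1$ for every $b \in B^{n,2s+1}$, so condition (4) forces $\ell \le s+1$; on the other hand, since $\ell$ is by definition the minimum of $\lev(\ve(b))$ over $b \in \mathcal{B}$ and that minimum is an achieved value, I would argue that the minimum is in fact exactly $s+1$. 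Thus the only candidate level is $\ell = s+1$.

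Next I would invoke condition (5) for the perfect level $\ell = s+1$. The weight $\La = 2s\La_1 + \La_n$ produced in Proposition~\ref{prop:B nonperfect} has level $2s+2$ as computed under the embedding $S$, but one must be careful: the elements $b_1, b_2$ live in $B_{A_{2n-1}^{(2)}}^{n,2s+1}$, whereas the condition on $B^{n,2s+1}$ itself is phrased in terms of the $B_n^{(1)}$-level. Since the embedding $S$ from Definition~\ref{def:Vn B} satisfies $\lev(S(\La)) = 2\,\lev(\La)$, the $A_{2n-1}^{(2)}$-level $2s+2$ corresponds to the $B_n^{(1)}$-level $s+1 = \ell$. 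So I would transport $b_1$ and $b_2$ back along $S$ to obtain two \emph{distinct} elements of $B^{n,2s+1}$ having the same $\ve$-value, a weight of level exactly $\ell$. This directly contradicts the uniqueness required by condition (5) of Definition~\ref{def:perfect}, whence $B^{n,2s+1}$ cannot be perfect.

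I would organize the argument as: (i) establish $\ell = s+1$ via Proposition~\ref{prop:bounds B} together with the observation that the bound $s+1$ is attained (a minimal element realizing it must exist, e.g.\ from the $\bij(\diagram(\cdot))$ construction or simply because $\ve(b_1)$ reaching level $2s+2 = 2\ell$ already shows the relevant weight is in range); (ii) exhibit the two elements from Proposition~\ref{prop:B nonperfect}; (iii) check they are genuinely distinct in $B^{n,2s+1}$ — here the distinctness follows because $P_1$ and $P_2$ differ in the sign decorating the height-$n$ column and $\bij$ is a bijection onto $X_{n-1}$-highest weight vectors, so their images, and hence $b_1 = f_{\vec{a}}(\bij(P_1))$ and $b_2 = f_{\vec{a}}(\bij(P_2))$, are distinct; (iv) conclude the failure of condition (5).

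The main obstacle I anticipate is the level bookkeeping in step (i)–(ii): one must confirm that the \emph{attained} minimum equals $s+1$ rather than merely being bounded below by it, and that the shared $\ve$-value of $b_1, b_2$ pulled back through $S$ sits at precisely the perfect level $\ell = s+1$ and not above it. Both hinge on the doubling relation $\lev(S(\La)) = 2\,\lev(\La)$ and on the fact that the $m_i$-scaling in Definition~\ref{def:Vn B} means $\ve$-values on $B^{n,2s+1}$ are obtained from $A_{2n-1}^{(2)}$-values by the inverse scaling; getting this factor-of-two matching exactly right is the delicate point, but once $\ell = s+1$ is fixed, the two-element collision from Proposition~\ref{prop:B nonperfect} finishes the proof immediately.
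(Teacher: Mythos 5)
Your proposal is correct and takes essentially the same approach as the paper: it combines the lower bound of Proposition~\ref{prop:bounds B} with the two colliding elements of Proposition~\ref{prop:B nonperfect}, pulled back through the embedding $S$ of Definition~\ref{def:Vn B}, to pin the would-be level at $\ell=s+1$ and then contradict the uniqueness in condition (5) of Definition~\ref{def:perfect}. The only blemish is the passing claim that ``condition (4) forces $\ell\le s+1$'' from the lower bound alone --- that upper bound actually requires the attained element of level $s+1$ --- but your organized steps (i)--(iv) supply exactly this attainment, so the argument is sound and matches the paper's.
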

\begin{proof}
This follows directly from Proposition~\ref{prop:B nonperfect} using the embedding $S$ of
Definition~\ref{def:Vn B}.
\end{proof}

\begin{prop}\label{prop:minimal B Vn}
There exists a bijection, induced by $\varepsilon$, from $B_{\min}^{n,2s}$ to $P_s^+$.
Hence $B^{n,2s}$ is perfect of level $s$.
\end{prop}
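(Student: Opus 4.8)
The plan is to establish perfectness of $B^{n,2s}$ by verifying conditions (4) and (5) of Definition~\ref{def:perfect}, having already secured the lower bound $\lev(\ve(b)) \geq s$ from Proposition~\ref{prop:bounds B}. This bound is precisely condition~(4) with $\ell = s$, so the crux is to produce, for every $\La \in P_s^+$, a \emph{unique} minimal element $b_\La \in B^{n,2s}$ with $\ve(b_\La) = \La$, and dually a unique $b^\La$ with $\vp(b^\La) = \La$. In other words I want to show that $\ve$ restricts to a bijection $B_{\min}^{n,2s} \to P_s^+$, which is exactly the assertion of the proposition; by the symmetry between $\ve$ and $\vp$ the statement for $\vp$ will follow in parallel.

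First I would exploit the embedding $S : B^{n,2s} \to B_{A_{2n-1}^{(2)}}^{n,2s}$ of Definition~\ref{def:Vn B}, under which $S(\La_i) = m_i \La_i$ with $(m_i)_{0\le i\le n} = (2,\ldots,2,1)$ and $\lev(S(\La)) = 2\,\lev(\La)$. Since $B_{A_{2n-1}^{(2)}}^{n,2s}$ is a perfect crystal of level $2s$ (this is the input used already in Proposition~\ref{prop:bounds B}), for each weight of level $2s$ in the $A_{2n-1}^{(2)}$ weight lattice there is a unique minimal preimage there. The strategy is to show that the level-$s$ weights $\La \in P_s^+$ of type $B_n^{(1)}$ correspond under $S$ exactly to a subset of the level-$2s$ weights $S(\La)$ of type $A_{2n-1}^{(2)}$, and that the unique $A_{2n-1}^{(2)}$-minimal element with $\ve = S(\La)$ in fact lies in the image $S(B^{n,2s})$. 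The characterization of the image via doubled $\pm$-diagrams (cited from \cite[Lemma 3.5]{FOS:2008}) is the tool that lets one check membership: I would verify that the minimal element realizing $S(\La)$ has a $\pm$-diagram obtained by doubling, hence descends to a genuine element of $B^{n,2s}$.

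Concretely, I would construct the candidate minimal elements explicitly, in the same spirit as the constructions in Section~\ref{subsec:BDA}, by associating to each fundamental weight a $\pm$-diagram and a string of lowering operators $f(\La_k)$, then setting $b_\La = f(\La_n)^{\ell_n}\cdots f(\La_2)^{\ell_2}\,\bij(\diagram(\La))$ for $\La = \sum_{i=0}^n \ell_i \La_i$. One then checks that $\ve(b_\La) = \La$, that $\lev(\ve(b_\La)) = s$ so that $b_\La \in B_{\min}^{n,2s}$, and that the assignment $\La \mapsto b_\La$ is well defined and injective. Injectivity together with the already-known surjectivity of $\ve$ onto $P_s^+$ (guaranteed by the level bound combined with the existence of the $b_\La$) yields the bijection $\ve : B_{\min}^{n,2s} \to P_s^+$, and an identical argument with $\vp$ completes condition~(5).

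The main obstacle I anticipate is showing that the $A_{2n-1}^{(2)}$-minimal element realizing $S(\La)$ actually lies in $S(B^{n,2s})$ rather than merely in the ambient crystal. The embedding $S$ multiplies $\ve_i$ by $m_i$, and since $m_n = 1$ while the other $m_i = 2$, the level does not simply double at the level of individual coordinates, so the minimal element downstairs could a priori fail the doubling condition on its $\pm$-diagram at the node $n$. Resolving this requires a careful analysis of how the $n$-th (spin) column interacts with the doubling, and this is precisely the point where the parity phenomenon seen in Propositions~\ref{prop:bounds B} and~\ref{prop:B nonperfect} is controlled: for even width $2s$ the obstruction vanishes, whereas for odd width $2s+1$ it does not, which is what makes $B^{n,2s}$ perfect but $B^{n,2s+1}$ not. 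I would therefore spend most of the effort verifying that the explicit $b_\La$ above has a doubled $\pm$-diagram and hence lies in the image of $S$.
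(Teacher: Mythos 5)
Your proposal takes essentially the same route as the paper's proof: embed $B^{n,2s}$ into the perfect crystal $B_{A_{2n-1}^{(2)}}^{n,2s}$ via $S$, invoke the explicit $\pm$-diagram-plus-lowering-operator construction of the $A_{2n-1}^{(2)}$ minimal elements from Section~\ref{subsec:BDA}, and use the doubled-$\pm$-diagram criterion of \cite[Lemma 3.5]{FOS:2008} to show that the minimal elements realizing the weights $S(\La)$ lie in the image of $S$, with the spin node $n$ (where $m_n=1$) handled exactly as in the paper by the observation that columns of height $n$ in \eqref{eq:La pm} are already doubled. The approach and the key technical point coincide with the paper's argument.
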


\begin{proof}
Let $S$ be the embedding from Definition~\ref{def:Vn B}. Then we have an induced embedding of dominant weights $\Lambda$ of $B_n^{(1)}$ into dominant weights of $A_{2n-1}^{(2)}$ via the map 
$S$, that sends $\Lambda_i \mapsto m_i \Lambda_i$.

In~\cite[Section 6]{S:2008} (see Section~\ref{subsec:BDA}) the minimal elements for 
$A_{2n-1}^{(2)}$ were constructed by giving a $\pm$-diagram and a sequence from the 
$\{2,\ldots, n\}$-highest weight to the minimal element. Since $(m_0,\ldots,m_{n-1},m_n)=
(2,\ldots,2,1)$ and columns of height $n$ in~\eqref{eq:La pm} for type $A_{2n-1}^{(2)}$ are doubled, 
it is clear from the construction that the $\pm$-diagrams corresponding to weights $S(\La)$
are in the image of $S$ of $\pm$-diagrams for $B_n^{(1)}$ (see~\cite[Lemma 3.5]{FOS:2008}).
Also, since under $S$ all weights $\La_i$ for $1\le i<n$ are doubled, it follows that the
sequences are ``doubled" using the $m_i$.
Hence a minimal element of $B^{n,2s}$ of level $s$ is in one-to-one correspondence with 
those minimal elements in $B_{A_{2n-1}^{(2)}}^{n, 2s}$ that can be obtained from doubling
a $\pm$-diagram of $B^{n,2s}$.
This implies that $\varepsilon$ defines a bijection between $B_{\min}^{n,2s}$ and $P_s^+$.
\end{proof}

The automorphism $\tau$ of the perfect KR crystal $B^{n,2s}$ is given by 
\[
	\tau\bigl(\sum_{i=0}^n\ell_i\La_i\bigr)=\begin{cases}
		\sum_{i=0}^n\ell_i\La_i & \text{if $n$ is even},\\
		\ell_0\La_1+\ell_1\La_0+\sum_{i=2}^n\ell_i\La_i & \text{if $n$ is odd}.
	\end{cases}
\]

\subsection{Type $C_n^{(1)}$} 
\label{subsec:C}
In this section we consider $B^{r,s}$ of type $C_n^{(1)}$ for $r<n$.
\begin{prop} \label{prop:C bounds}
Let $r < n$. Then
\begin{equation*}
\begin{split}
	&\min \{ \lev(\ve(b)) \mid b \in B^{r,2s+1}\} \geq s+1,\\
	&\min \{ \lev(\ve(b)) \mid b \in B^{r,2s} \} \geq s.
\end{split}
\end{equation*}
\end{prop}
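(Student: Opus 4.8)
The plan is to mimic exactly the strategy used for type $B_n^{(1)}$ in Proposition~\ref{prop:bounds B}, exploiting the embedding of $C_n^{(1)}$-crystals into $A_{2n+1}^{(2)}$-crystals rather than a direct computation inside $B^{r,s}$. Recall from Definition~\ref{def:KR C} that $B^{r,s}$ of type $C_n^{(1)}$ is realized as the fixed-point set under $\sigma$ inside $B_{A_{2n+1}^{(2)}}^{r,s}$, together with the crystal embedding $S:B^{r,s}\to B_{A_{2n+1}^{(2)}}^{r,s}$ under which the level of a weight doubles, that is $\lev(S(\La))=2\,\lev(\La)$. The crucial external input I would invoke is that $B_{A_{2n+1}^{(2)}}^{r,s}$ is itself a perfect crystal of level $s$; this is exactly the case covered in Section~\ref{subsec:BDA}, since for type $A_{2n-1}^{(2)}$ (with $n$ replaced by $n+1$) and $1\le r\le n$ the perfectness was established in~\cite{S:2008}, and in particular $\lev(\ve(\tilde b))\ge s$ for every $\tilde b$ in that crystal.

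First I would treat the odd case $B^{r,2s+1}$. Suppose for contradiction that there is an element $b\in B^{r,2s+1}$ with $\lev(\ve(b))=p<s+1$, i.e. $p\le s$. Applying the embedding $S$ produces an element $\tilde b=S(b)\in B_{A_{2n+1}^{(2)}}^{r,2s+1}$. The key point is to transfer the level bound through $S$: because $S$ intertwines the Kashiwara operators in the prescribed way (the operator $e_i$ on $C_n^{(1)}$ corresponds to $e_{i+1}$ for $1\le i\le n$ and to $e_0e_1$ for $i=0$, as in Definition~\ref{def:KR C}) and because $S(\La_i)=m_i\La_i$ doubles the level, one obtains $\lev(\ve(\tilde b))=2\,\lev(\ve(b))=2p\le 2s<2s+1$. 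This contradicts the perfectness of $B_{A_{2n+1}^{(2)}}^{r,2s+1}$, which forces $\lev(\ve(\tilde b))\ge 2s+1$. Hence no such $b$ exists and $\min\{\lev(\ve(b))\mid b\in B^{r,2s+1}\}\ge s+1$.

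The even case $B^{r,2s}$ runs along identical lines: an element $b$ with $\lev(\ve(b))=p<s$ yields, via $S$, an element $\tilde b\in B_{A_{2n+1}^{(2)}}^{r,2s}$ with $\lev(\ve(\tilde b))=2p<2s$, again contradicting perfectness of the level-$2s$ crystal $B_{A_{2n+1}^{(2)}}^{r,2s}$. I expect the main obstacle to lie not in the combinatorial bookkeeping but in justifying cleanly that $\lev(\ve(b))$ scales by the factor $2$ under $S$: one must check that the minimal string length $\ve_i(b)=\max\{k\mid e_i^k(b)\neq\emptyset\}$ transforms correctly under the nonuniform rescaling $m_i$, and in particular handle the composite operator $e_0=e_0e_1$ at the affine node with care, so that the weighted sum defining $\lev\circ\ve$ indeed doubles. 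Once that intertwining is pinned down, the contradiction with the known perfectness of the ambient $A_{2n+1}^{(2)}$-crystal closes the argument in both parities.
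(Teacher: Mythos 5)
Your proposal is correct and takes essentially the same approach as the paper: the paper's own proof simply invokes the realization of $B^{r,s}$ inside $B_{A_{2n+1}^{(2)}}^{r,s}$ from Definition~\ref{def:KR C} and declares the argument ``similar to the proof of Proposition~\ref{prop:bounds B}'', which is exactly the embedding-plus-perfectness contradiction you spell out. Your added care about how $\lev(\ve(b))$ doubles under $S$ (including the composite operator at the affine node) fills in a detail the paper leaves implicit, but it is the same route.
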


\begin{proof}
By Definition~\ref{def:KR C}, the crystal $B^{r,s}$ is realized inside $B_{A_{2n+1}^{(2)}}^{r,s}$.
The proof is similar to the proof of Proposition~\ref{prop:bounds B} for type $B_n^{(1)}$.
\end{proof}

Hence to show that $B^{r,2s+1}$ is not perfect, it is suffices to give two elements $b_1, b_2\in 
B_{A_{2n+1}^{(2)}}^{r,2s+1}$ that are fixed points under $\sigma$ with 
$\ve(b_1) = \ve(b_2) = \Lambda$, where $\lev(\Lambda) = 2s+2$.
\begin{prop}  \label{prop:C nonperfect}
Let $b_1, b_2 \in B_{A_{2n+1}^{(2)}}^{r,2s+1}$, where $b_1$ consists of $s$ columns of the form 
read from bottom to top $(1,2,\ldots,r)$, $s$ columns of the form $(\ol{r}, \ol{r-1},\ldots, \ol{1})$, 
and a column $(\ol{r+1},\ldots,\ol{2})$.
In $b_2$ the last column is replaced by $(r+2,\ldots,2r+2)$ if $2r+2\le n$ and
$(r+2,\ldots,n,\ol{n},\ldots,\ol{k})$ of height $n$ otherwise. Then
\begin{equation*}
	\ve(b_1) = \ve(b_2) = \begin{cases}
		s \La_r + \La_{r+1} & \text{if $r>1$,}\\
		s (\La_0 + \La_1) + \La_2 & \text{if $r=1$,}
	\end{cases}
\end{equation*}
which is of level $2s + 2$.
\end{prop}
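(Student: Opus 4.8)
The plan is to treat $b_1$ and $b_2$ as explicit tableaux in $B_{A_{2n+1}^{(2)}}^{r,2s+1}$, to confirm that they are genuine $\sigma$-fixed elements (so that they descend to $B^{r,2s+1}$ of type $C_n^{(1)}$ under $S$), and then to compute $\ve(b_1)$ and $\ve(b_2)$ column by column with the tensor product signature rule, reading off the coefficient $\ve_j$ of $\La_j$ for $0\le j\le n+1$.

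First I would settle admissibility and membership. Each column of $b_1$ and $b_2$ is a strictly increasing Kashiwara--Nakayama column for the classical subalgebra $C_{n+1}$, and the collection of columns gives a shape obtained from $(s^r)$ by removing vertical dominoes; hence $b_1,b_2\in B_{A_{2n+1}^{(2)}}^{r,2s+1}$ by the realization of \cite[Theorem 5.7]{FOS:2008}. To see that they are fixed by $\sigma$ I would exhibit the underlying $\pm$-diagrams and check that each is obtained by doubling, exactly as in the proof of Proposition~\ref{prop:B nonperfect} (cf.\ \cite[Lemma 3.5]{FOS:2008}); this places $b_1,b_2$ in $S(B^{r,2s+1})$. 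Since they differ only in the special last column they are distinct, which is what the forthcoming non-perfectness statement needs.

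The computation of $\ve$ is the heart of the matter. I would record, for each $j$, the individual $\ve_j$ and $\vp_j$ of the three column types and then cancel signs in the reading word. The highest weight column $(1,\ldots,r)$ has $\ve_j=0$ for all classical $j$ and so contributes no raising signs; the lowest weight column $(\ol{r},\ldots,\ol 1)$ satisfies $\ve_j=\delta_{j,r}$, so the $s$ such columns contribute raising signs only at node $r$, and a short inspection of the reading word shows that all $s$ survive cancellation against the lowering signs of the highest weight columns. The whole point is the special column: the barred column $(\ol{r+1},\ldots,\ol 2)$ of $b_1$ is $e_{r+1}$-admissible exactly once (through $\ol{r+1}\mapsto\ol{r+2}$) and $e_j$-inadmissible for every other $j$, while the unbarred column of $b_2$ is likewise $e_{r+1}$-admissible exactly once (through $r+2\mapsto r+1$) and $e_j$-inadmissible otherwise. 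Thus both special columns contribute a single surviving raising sign at node $r+1$ and at no other node, even though their classical weights --- and hence their $\vp$ --- differ (the leftover lowering signs sit at node $1$ for $b_1$ and at node $2r+1$ for $b_2$). Since the bulk contributes $s\La_r$ to $\ve$ and the special column contributes $\La_{r+1}$ in both cases, one gets $\ve(b_1)=\ve(b_2)=s\La_r+\La_{r+1}$ for $r>1$. I would cross-check the answer against $\wt(b_i)=\vp(b_i)-\ve(b_i)$, which simultaneously confirms $\vp(b_1)\neq\vp(b_2)$.

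The remaining work, and the main obstacle, is the affine node $0$ together with the edge case $r=1$. For $r>1$ one must verify $\ve_0(b_i)=0$, which requires the explicit action of $e_0$ in $A_{2n+1}^{(2)}$ (equivalently the relation $e_0=e_0e_1$ and the automorphism $\sigma$ of \cite[Definition 4.2]{S:2008} used in Definition~\ref{def:KR C}); for $r=1$ the single boxes $1$ and $\ol 1$ are sensitive to nodes $0$ and $1$, so the surviving raising signs redistribute to give $\ve(b_1)=\ve(b_2)=s(\La_0+\La_1)+\La_2$ instead. In either case the level is read off from the $A_{2n+1}^{(2)}$ level formula $\lev(\La)=\ell_0+\ell_1+2\sum_{i\ge 2}\ell_i$, giving $2s+2$. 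I expect the careful tracking of cancellations at the affine node, and the verification that the two distinct special columns really do produce identical $\ve$ across all nodes, to be the only genuinely delicate points; everything else is a finite, if tedious, signature bookkeeping that can be checked explicitly, as in Proposition~\ref{prop:B nonperfect}.
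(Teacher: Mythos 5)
Your proposal is correct and takes essentially the same approach as the paper: the paper's entire proof of this proposition is the single sentence ``The claim is easy to check explicitly,'' and your column-by-column signature computation (bulk columns contributing $s\La_r$, the special column contributing $\La_{r+1}$, with separate treatment of the affine node and of the case $r=1$) is precisely that explicit check written out. The membership/$\sigma$-invariance part of your argument is what the paper defers to the corollary immediately following this proposition, so nothing essential is missing relative to the paper's own presentation.
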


\begin{proof}
The claim is easy to check explicitly.
\end{proof}

\begin{corollary}
The KR crystal $B^{n,2s+1}$ of type $C_n^{(1)}$ is not perfect.
\end{corollary}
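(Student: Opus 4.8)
The plan is first to pin down what the statement can mean: for type $C_n^{(1)}$ Table~\ref{tab:c} records $c_n=1$, so $\frac{2s+1}{c_n}=2s+1$ is an integer and Conjecture~\ref{conj:perfectness} predicts $B^{n,2s+1}$ to be \emph{perfect} of level $2s+1$ (a case already settled in~\cite{KMN2:1992}). Hence the subscript $n$ in the corollary must be a misprint for the generic row index $r$ of Section~\ref{subsec:C}, where $r<n$ and $c_r=2$, so that $\frac{2s+1}{c_r}=s+\tfrac12\notin\Z$; it is exactly these $B^{r,2s+1}$ that Propositions~\ref{prop:C bounds} and~\ref{prop:C nonperfect} were set up to handle. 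Assuming $B^{r,2s+1}$ were perfect of some level $\ell$, I would first fix $\ell$: by condition~(4) of Definition~\ref{def:perfect} together with the fact that $\ve:\mathcal{B}_{\min}\to P_\ell^+$ is a bijection, the perfect level equals $\min_b\lev(\ve(b))$. Proposition~\ref{prop:C bounds} gives $\lev(\ve(b))\ge s+1$, and the elements produced below realize the value $s+1$, so the only possible perfect level is $\ell=s+1$.

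The heart of the argument is then to transport the coincidence of Proposition~\ref{prop:C nonperfect} into $B^{r,2s+1}$ itself. That proposition furnishes two distinct $\sigma$-fixed elements $b_1\ne b_2$ of $B_{A_{2n+1}^{(2)}}^{r,2s+1}$ with $\ve(b_1)=\ve(b_2)=\Lambda$, where $\Lambda=s\La_r+\La_{r+1}$ for $r>1$ and $\Lambda=s(\La_0+\La_1)+\La_2$ for $r=1$, of $A_{2n+1}^{(2)}$-level $2s+2$. Because $B^{r,2s+1}$ is realized as the $\sigma$-fixed point set inside $B_{A_{2n+1}^{(2)}}^{r,2s+1}$ (Definition~\ref{def:KR C}), the elements $b_1,b_2$ already lie in $B^{r,2s+1}$, and the shifted operators $e_i=e_{i+1}$ (for $1\le i\le n$), $e_0=e_0e_1$ convert the $A_{2n+1}^{(2)}$-value $\Lambda$ into the $C_n^{(1)}$-value $\mu=s\La_{r-1}+\La_r$ for $r>1$, respectively $\mu=s\La_0+\La_1$ for $r=1$; equivalently $\mu=S^{-1}(\Lambda)$, and since $\lev(S(\mu))=2\lev(\mu)$ we get $\lev(\mu)=\tfrac12(2s+2)=s+1$. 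Thus $S^{-1}(b_1)\ne S^{-1}(b_2)$ are two distinct elements of $B^{r,2s+1}$ with the common value $\ve=\mu\in P_{s+1}^+$. As $\lev(\mu)=s+1=\ell$, both are minimal elements, and this contradicts the uniqueness of $b_\mu$ required by condition~(5) of Definition~\ref{def:perfect}. Therefore $B^{r,2s+1}$ is not perfect.

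I expect the only real work to be the verification, deferred in Proposition~\ref{prop:C nonperfect}, that $\ve(b_1)=\ve(b_2)=\Lambda$ holds explicitly in the $A_{2n+1}^{(2)}$-crystal, which amounts to reading off the $\ve_i$ from the tableau and $\pm$-diagram descriptions of $b_1,b_2$ and checking that the two choices of last column give the same $\ve$. The remaining step is purely bookkeeping: confirming that the folding $e_0=e_0e_1$, $e_i=e_{i+1}$ together with $S(\La_i)=m_i\La_i$ indeed sends $\Lambda$ of level $2s+2$ to $\mu$ of level $s+1$, so that the conflicting weight sits precisely at the candidate perfect level $s+1$; if it landed at a strictly higher level, condition~(5) would not be contradicted. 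Once the level is matched, the failure of uniqueness is immediate.
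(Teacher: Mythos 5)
Your identification of the misprint (the $n$ in the statement should be the generic $r<n$ of Section~\ref{subsec:C}, since $c_n=1$ for type $C_n^{(1)}$ and $B^{n,2s+1}$ is in fact perfect) is correct, and your level bookkeeping is sound: the folded operators of Definition~\ref{def:KR C} convert $\ve(b_i)=\La$ of $A_{2n+1}^{(2)}$-level $2s+2$ into a $C_n^{(1)}$-weight $\mu$ of level $s+1$, which together with Proposition~\ref{prop:C bounds} pins the only possible perfect level to $\ell=s+1$ and then lets the pair $b_1\ne b_2$ violate the uniqueness in condition (5) of Definition~\ref{def:perfect}. This spells out carefully what the paper compresses into the phrase ``combining this result with Proposition~\ref{prop:C bounds}.'' However, there is a genuine gap at the crucial step: you assert that $b_1,b_2$ ``already lie in $B^{r,2s+1}$'' because $B^{r,2s+1}$ is realized as the $\sigma$-fixed point set inside $B_{A_{2n+1}^{(2)}}^{r,2s+1}$. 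That is a non sequitur --- membership in the ambient crystal does not make an element a fixed point of $\sigma$ --- and your claim that Proposition~\ref{prop:C nonperfect} ``furnishes two distinct $\sigma$-fixed elements'' misreads that proposition: its statement and proof concern only the values $\ve(b_1)=\ve(b_2)$, not $\sigma$-fixedness. Without knowing $\sigma(b_i)=b_i$, the elements $b_1,b_2$ say nothing about $B^{r,2s+1}$, and the contradiction with condition (5) never materializes.

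This missing verification is exactly the content of the paper's own proof of the corollary: one checks that the $\{2,\ldots,n\}$-highest weight elements in the same classical components as $b_1$ and $b_2$ correspond to $\pm$-diagrams that are invariant under $\sigma$; since $\sigma$ is defined on $\pm$-diagrams and extended to the whole crystal by commuting with the relevant classical operators (see Definition~\ref{def:KR C} and \cite[Definition 4.2]{S:2008}), this yields $\sigma(b_1)=b_1$ and $\sigma(b_2)=b_2$, hence $b_1,b_2\in S(B^{r,2s+1})$. This is a concrete check on the explicit tableaux of Proposition~\ref{prop:C nonperfect}, but it is the one step that actually requires an argument, and it is the step your proposal omits; your closing remark that ``the only real work'' is verifying $\ve(b_1)=\ve(b_2)$ shows the omission is substantive rather than notational. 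Once this $\sigma$-invariance is supplied, the rest of your argument goes through and coincides with the paper's.
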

\begin{proof}
The $\{2, \ldots, n\}$-highest weight elements in the same component as $b_1$ and $b_2$ 
of Proposition~\ref{prop:C nonperfect} correspond to $\pm$-diagrams that are invariant under 
$\sigma$. Hence, by Definition~\ref{def:KR C}, $b_1$ and $b_2$ are fixed points under $\sigma$.
Combining this result with Proposition~\ref{prop:C bounds} proves that $B^{r,2s+1}$ is not perfect.
\end{proof}

\begin{prop} \label{prop:minimal C}
There exists a bijection, induced by $\varepsilon$, from $B_{\min}^{r,2s}$ to $P_s^{+}$.
Hence $B^{r,2s}$ is perfect of level $s$.
\end{prop}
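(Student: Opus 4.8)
The goal is to establish a level-preserving bijection $\varepsilon : B_{\min}^{r,2s} \to P_s^+$ for type $C_n^{(1)}$ with $r<n$, thereby verifying conditions (4) and (5) of Definition~\ref{def:perfect} and concluding that $B^{r,2s}$ is perfect of level $s$. The strategy I would pursue exactly parallels the argument for $B^{n,2s}$ of type $B_n^{(1)}$ in Proposition~\ref{prop:minimal B Vn}: exploit the embedding $S:B^{r,s}\to B_{A_{2n+1}^{(2)}}^{r,s}$ from Definition~\ref{def:KR C} to transport the already-known minimal-element structure of the ambient crystal $B_{A_{2n+1}^{(2)}}^{r,2s}$ down to $B^{r,2s}$. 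Since Proposition~\ref{prop:C bounds} already gives the lower bound $\lev(\varepsilon(b))\ge s$ for all $b\in B^{r,2s}$, it remains only to show that $\varepsilon$ restricts to a bijection from the set of $b$ achieving equality onto $P_s^+$.

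\emph{Step 1: Identify the minimal elements upstairs.} By Definition~\ref{def:KR C}, $B^{r,s}$ is the $\sigma$-fixed-point set inside $B_{A_{2n+1}^{(2)}}^{r,s}$, with $S(\La_i)=\La_{i+1}$ for $1\le i\le n$ and $S(\La_0)=\La_0+\La_1$, so that $\lev(S(\La))=2\,\lev(\La)$. The crystal $A_{2n+1}^{(2)}$ is one of the types covered by Section~\ref{subsec:BDA} (it is the $A_{2m-1}^{(2)}$ family with $m=n+1$), so its minimal elements are described explicitly: each is obtained from a $\pm$-diagram via $\diagram$ and a string of lowering operators $f(\La_k)$. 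I would first recall this description and the corresponding embedding of dominant weights.

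\emph{Step 2: Match the $\pm$-diagrams and operator strings.} The key technical point is that the minimal elements of $B^{r,2s}$ correspond precisely to those minimal elements of $B_{A_{2n+1}^{(2)}}^{r,2s}$ that are fixed by $\sigma$, and that these are exactly the ones whose $\pm$-diagrams arise by the $\sigma$-doubling procedure (see~\cite[Lemma 3.5]{FOS:2008}, and compare the argument already used in the corollary following Proposition~\ref{prop:C nonperfect}, where the $\sigma$-invariance of the relevant $\pm$-diagrams was verified). One must check that under $S$ the fundamental weights $\La_i$ ($1\le i\le n$) and the node $0$ rescale in a way compatible with the operator strings $f(\La_k)$, so that the string realizing a minimal element downstairs maps under $S$ to the string realizing the corresponding minimal element upstairs; here the substitution $e_0\mapsto e_0e_1$, $e_i\mapsto e_{i+1}$ of Definition~\ref{def:KR C} must be tracked through the sequences in~\eqref{eq:La pm}. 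This establishes that $S$ carries $B_{\min}^{r,2s}$ bijectively onto the $\sigma$-fixed minimal elements of $B_{A_{2n+1}^{(2)}}^{r,2s}$.

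\emph{Step 3: Conclude the bijection and perfectness.} Because $\varepsilon$ is already a bijection from the minimal elements of $B_{A_{2n+1}^{(2)}}^{r,2s}$ onto $P_{2s}^+(A_{2n+1}^{(2)})$, and because $S$ induces the level-preserving isomorphism $\La\mapsto S(\La)$ identifying $P_s^+(C_n^{(1)})$ with the $\sigma$-fixed (i.e.\ doubled) weights in $P_{2s}^+(A_{2n+1}^{(2)})$, the restriction of $\varepsilon$ to $B_{\min}^{r,2s}$ is a bijection onto $P_s^+$. Together with Proposition~\ref{prop:C bounds}, which rules out $\lev(\varepsilon(b))<s$, this verifies conditions (4) and (5), so $B^{r,2s}$ is perfect of level $s$. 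The main obstacle is Step~2: one must verify carefully that the $\sigma$-fixed minimal elements are \emph{exactly} the doubled ones and that no minimal element of $B_{A_{2n+1}^{(2)}}^{r,2s}$ with an undoubled $\pm$-diagram is $\sigma$-invariant, since otherwise $\varepsilon$ could fail to be surjective or injective on $B_{\min}^{r,2s}$; this is where the compatibility of $\sigma$ with the $\pm$-diagram combinatorics of~\cite[Lemma 3.5]{FOS:2008} does the real work.
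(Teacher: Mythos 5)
Your proposal follows essentially the same route as the paper's proof: realize $B^{r,2s}$ as the $\sigma$-fixed points inside $B_{A_{2n+1}^{(2)}}^{r,2s}$ via Definition~\ref{def:KR C}, observe that the image weights are exactly those with $\ell_0=\ell_1$, check from the explicit construction of Section~\ref{subsec:BDA} that the minimal elements for such weights are $\sigma$-invariant, and pull back the upstairs bijection (perfectness of level $2s$) from~\cite{S:2008}, with Proposition~\ref{prop:C bounds} supplying condition (4). The only blemishes are terminological, not substantive: the relevant mechanism here is $\sigma$-invariance rather than the doubling of~\cite[Lemma 3.5]{FOS:2008} (which governs the virtual embeddings for $B_n^{(1)}$, $A_{2n}^{(2)}$, $D_{n+1}^{(2)}$, where arrows are raised to powers $m_i$, whereas here $e_0\mapsto e_0e_1$ is a folding), and $S$ doubles levels rather than preserving them.
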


\begin{proof}
By Definition~\ref{def:KR C}, $B^{r,s}$ of type $C_n^{(1)}$ is realized inside
$B^{r,s}_{A_{2n+1}^{(2)}}$ as the fixed points under $\sigma$. Under the embedding $S$,
it is clear that a dominant weight $\La=\ell_0\La_0 + \ell_1 \La_1+\cdots+ \ell_{n+1}\La_{n+1}$
of type $A_{2n+1}^{(2)}$ is in the image if and only if $\ell_0=\ell_1$. Hence it is clear from the
construction of the minimal elements for $A_{2n+1}^{(2)}$ as described in Section~\ref{subsec:BDA} 
that the minimal elements corresponding to $\La$ with $\ell_0=\ell_1$ are invariant under $\sigma$.
By \cite[Theorem 6.1]{S:2008} there is a bijection between all dominant weights $\La$ 
of type $A_{2n+1}^{(2)}$ with $\ell_0=\ell_1$ and $\lev(\La)=2s$ and minimal elements in
$B_{A_{2n+1}^{(2)}}^{r,2s}$ that are invariant under $\sigma$. Hence using $S$, there is a bijection
between dominant weights in $P_s^+$ of type $C_n^{(1)}$ and 
$B_{\min}^{r,2s}$. 
\end{proof}

The automorphism $\tau$ of the perfect KR crystal $B^{r,2s}$ is given by the identity.

\subsection{Type $C_n^{(1)}$ for $r=n$}
\label{subsec:Cn}
This case is treated in~\cite{KMN2:1992}. For the minimal elements, we follow the construction in Section~\ref{subsec:BDA}. To every fundamental weight $\La_k$ we
associate a column tableau $T(\La_k)$ of height $n$ whose entries are $k+1,k+2,\ldots,n,
\ol{n},\ldots,\ol{n-k+1}$ ($1,2,\ldots,n$ for $k=0$) reading from bottom to top. Let $f(\La_k)$ be defined
such that $T(\La_k)=f(\La_k)b_1$, where $b_k$ is the highest weight tableau in $B(k\La_n)$. Then the
minimal element $b$ in $B^{n,s}$ such that $\ve(b)=\La=\sum_{i=0}^n\ell_i\La_i\in P_s^+$ 
is constructed as
\[
b=f(\La_n)^{\ell_n}\cdots f(\La_1)^{\ell_1}b_s.
\]
The automorphism $\tau$ is given by 
\[
\tau(\sum_{i=0}^n\ell_i\La_i)=\sum_{i=0}^n\ell_i\La_{n-i}.
\]

\subsection{Type $A_{2n}^{(2)}$}
\label{subsec:A2}
For type $A_{2n}^{(2)}$ one may use the result of Naito and Sagaki~\cite[Theorem 2.4.1]{NS:2006} 
which states that under their ~\cite[Assumption 2.3.1]{NS:2006}
(which requires that $B^{r,s}$ for $A_{2n}^{(1)}$ is perfect) all $B^{r,s}$ for
$A_{2n}^{(2)}$ are perfect. Here we provide a description of the minimal elements via the emebdding 
$S$ into $B_{C_n^{(1)}}^{r,2s}$.

\begin{prop}\label{prop:minimal A2n}
The minimal elements of $B^{r,s}$ of level $s$ are precisely those that corresponding to 
doubled $\pm$-diagrams in $B_{C_n^{(1)}}^{r,2s}$.
\end{prop}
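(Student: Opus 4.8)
The plan is to transfer the perfectness of $B_{C_n^{(1)}}^{r,2s}$, already established in Proposition~\ref{prop:minimal C}, to $B^{r,s}$ of type $A_{2n}^{(2)}$ by means of the level-preserving embedding $S:B^{r,s}\to B_{C_n^{(1)}}^{r,2s}$ of Definition~\ref{def:embedding}. Recall that $(m_0,\ldots,m_n)=(1,2,\ldots,2,2)$, that $S(\La_i)=m_i\La_i$, and that $\lev(S(\La))=\lev(\La)$. The first step is to show that $S$ intertwines $\ve$ with its weight image, namely $\ve(S(b))=S(\ve(b))$ for all $b\in B^{r,s}$. Since $S(\La_i)=m_i\La_i$, this is equivalent to the string-length identities $\ve_i(S(b))=m_i\,\ve_i(b)$ for every $i\in I$. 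Granting these, level preservation yields $\lev(\ve(S(b)))=\lev(\ve(b))$, so that $b$ is minimal in $B^{r,s}$ (that is, $\lev(\ve(b))=s$) if and only if $S(b)$ is minimal in $B_{C_n^{(1)}}^{r,2s}$.

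To establish the string-length identity I would argue from the defining relations $S(e_ib)=e_i^{m_i}S(b)$. Writing $a=\ve_i(b)$, applying $S$ to $e_i^a b\neq\emptyset$ and to $e_i^{a+1}b=\emptyset$ gives $e_i^{m_i a}S(b)\neq\emptyset$ and $e_i^{m_i(a+1)}S(b)=\emptyset$, whence $m_i a\le\ve_i(S(b))<m_i(a+1)$. The formal relations alone leave $\ve_i(S(b))$ undetermined within this block of length $m_i$; pinning it to the lower end $m_i a$ requires that $S(b)$ occupy the bottom of each $m_i$-block of its $i$-string, i.e.\ that $S$ be a similarity of crystals. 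This is precisely how $S$ is built in~\cite{FOS:2008}, and I expect verifying (or citing) this finer property to be the main obstacle, since it does not follow from Definition~\ref{def:embedding} in isolation.

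Finally I would combine the equivalence ``$b$ minimal $\iff S(b)$ minimal'' with the description of the image of $S$: by~\cite[Lemma 3.5]{FOS:2008} the elements of $B_{C_n^{(1)}}^{r,2s}$ lying in $S(B^{r,s})$ are exactly those corresponding to $\pm$-diagrams obtained by doubling a $\pm$-diagram of $B^{r,s}$. Hence $S$ restricts to a bijection between the minimal elements of $B^{r,s}$ and the minimal elements of $B_{C_n^{(1)}}^{r,2s}$ arising from doubled $\pm$-diagrams, the latter being described explicitly in Proposition~\ref{prop:minimal C}; this is the assertion of the proposition. Perfectness of $B^{r,s}$ is in any case already guaranteed by~\cite{NS:2006}, so the argument only serves to pin down the minimal elements. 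An alternative, closer to the proofs of Propositions~\ref{prop:minimal B Vn} and~\ref{prop:minimal C}, would double the explicit $\pm$-diagram-and-sequence data of each minimal element of $B_{C_n^{(1)}}^{r,2s}$ directly, but the $\ve$-compatibility route above is more transparent.
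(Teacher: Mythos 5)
Your proof is essentially sound, but it takes a genuinely different route from the paper's. The paper argues constructively, exactly as in the proof of Proposition~\ref{prop:minimal B Vn}: the minimal elements of $B_{C_n^{(1)}}^{r,2s}$ are known explicitly (Proposition~\ref{prop:minimal C}, resting on the $\pm$-diagram and $f$-sequence construction recalled in Section~\ref{subsec:BDA}), and for a doubled dominant weight $S(\La)$ both the $\pm$-diagram and the $f$-sequence double, so the corresponding ambient minimal element lies in $S(B^{r,s})$, while conversely anything in the image has doubled data; this is precisely the ``alternative'' you mention in your final sentence. Your primary route instead transfers minimality abstractly: from $\ve_i(S(b))=m_i\,\ve_i(b)$ and $\lev(S(\La))=\lev(\La)$ you get that $b$ is minimal if and only if $S(b)$ is, and then you cut out the image by doubled $\pm$-diagrams via \cite[Lemma 3.5]{FOS:2008}. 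You correctly isolate the one nontrivial input this needs: the alignment property $\ve_i(S(b))=m_i\ve_i(b)$ does not follow from the intertwining relations of Definition~\ref{def:embedding} alone, but it does hold for the virtual-crystal construction of $S$ in \cite{FOS:2008}, so citing it is legitimate (the paper uses the same fact tacitly when it matches $\ve$-weights across $S$). The real trade-off is what each approach buys at the end: the paper's constructive doubling shows directly that \emph{every} doubled level-$s$ weight is the $\ve$-weight of an ambient minimal element lying in the image, which is what allows it to conclude that $\ve:B_{\min}^{r,s}\to P_s^+$ is a bijection; your route yields the set identification asserted in the proposition and injectivity of $\ve$, but surjectivity onto $P_s^+$ must then be imported from \cite{NS:2006}, as you acknowledge. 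Since the paper itself invokes \cite{NS:2006} for perfectness in type $A_{2n}^{(2)}$, this borrowing is acceptable, but be aware that your argument alone would not recover the bijection statement with which the paper's proof concludes.
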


\begin{proof}
In Proposition~\ref{prop:minimal C} a description of the minimal elements of $B_{C_n^{(1)}}^{r, 2s}$ 
is given. We have the realization of $B^{r,s}$ via the map $S$ from Definition~\ref{def:embedding}. 
In the same way as in the proof of Proposition~\ref{prop:minimal B Vn} one can show, that the minimal
elements of $B_{C_n^{(1)}}^{r, 2s}$ that correspond to doubled dominant weights are precisely those
in the realization of $B^{r,s}$, hence $\varepsilon$ defines a bijection between $B_{\min}^{r,s}$ and
$P_s^+$.
\end{proof}
The automorphism $\tau$ is given by the identity.

\subsection{Type $D_{n+1}^{(2)}$ for $r<n$}
\label{subsec:D2}
\begin{prop}\label{prop:minimal Dn+1} 
Let $r<n$. There exists a bijection $ B_{\min}^{r,s}$ to $P_s^{+}$, defined by $\varepsilon$.
Hence $B^{r,s}$ is perfect.
\end{prop}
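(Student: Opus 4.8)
The plan is to reduce the perfectness of $B^{r,s}$ of type $D_{n+1}^{(2)}$ (for $r<n$) to the already-established perfectness of $B_{C_n^{(1)}}^{r,2s}$ via the embedding $S$ of Definition~\ref{def:embedding}, mirroring the arguments used in Propositions~\ref{prop:minimal B Vn}, \ref{prop:minimal C}, and~\ref{prop:minimal A2n}. The map $S:B^{r,s}_\geh\to B_{C_n^{(1)}}^{r,2s}$ is injective and level-preserving (recall $\lev(S(\La))=\lev(\La)$), with $S(\La_i)=m_i\La_i$ where $(m_0,\ldots,m_n)=(1,2,\ldots,2,1)$, and the $\pm$-diagrams in the image of $S$ are precisely the doubled $\pm$-diagrams of $B^{r,s}$ by~\cite[Lemma 3.5]{FOS:2008}. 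Since conditions (1)--(3) of Definition~\ref{def:perfect} have already been verified in Section~\ref{sec:proof}, the task is to establish conditions (4) and (5), and this is exactly what the claimed $\varepsilon$-bijection between $B_{\min}^{r,s}$ and $P_s^+$ encodes.

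First I would characterize which dominant weights $\La$ of type $C_n^{(1)}$ lie in the image of $S$ on the level of weights. Since $S(\La_i)=m_i\La_i$ with $m_0=m_n=1$ and $m_i=2$ for $1\le i<n$, a weight $\La=\sum_i\ell_i\La_i$ of $C_n^{(1)}$ is in the image precisely when $\ell_1,\ldots,\ell_{n-1}$ are even; because $\lev(S(\La))=\lev(\La)$, the minimal elements of $B^{r,s}$ of level $s$ must map to minimal elements of $B_{C_n^{(1)}}^{r,2s}$ of the same level $s$. Next I would invoke the description of the minimal elements of $B_{C_n^{(1)}}^{r,2s}$ from Proposition~\ref{prop:minimal C}, which identifies them with doubled $\pm$-diagrams together with doubled $f_i$-strings. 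The key step is then to check that the ``doubling'' on both sides is compatible: the $\pm$-diagrams realizing $B^{r,s}$ under $S$ are exactly the doubled ones, and since all intermediate weights $\La_i$ ($1\le i<n$) are doubled while $\La_0$ and $\La_n$ are not, the strings of lowering operators constructed as in Section~\ref{subsec:BDA} get uniformly ``doubled'' using the $m_i$. This matches the structure of the minimal elements on the $C_n^{(1)}$ side and guarantees that each minimal element of $B_{C_n^{(1)}}^{r,2s}$ coming from a doubled $\pm$-diagram is the image under $S$ of a unique element of $B^{r,s}$.

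Having set this up, the argument proceeds as follows. By Proposition~\ref{prop:minimal C}, $\varepsilon$ furnishes a bijection from $B_{\min}^{r,2s}$ of type $C_n^{(1)}$ onto $P_s^+$ of type $C_n^{(1)}$. Restricting to those minimal elements that arise from doubled $\pm$-diagrams, $\varepsilon$ gives a bijection onto the set of $C_n^{(1)}$-weights in the image of $S$, namely $S(P_s^+)$ where $P_s^+$ is now of type $D_{n+1}^{(2)}$. Pulling back through the injective, level-preserving map $S$ and using that $S$ intertwines $\varepsilon$ with $\varepsilon$ (up to the coordinatewise $m_i$, which is precisely the relation $\varepsilon(S(b))=S(\varepsilon(b))$ forced by $S(e_ib)=e_i^{m_i}S(b)$), I obtain a bijection, induced by $\varepsilon$, from $B_{\min}^{r,s}$ of type $D_{n+1}^{(2)}$ onto $P_s^+$ of type $D_{n+1}^{(2)}$. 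This bijection yields both condition (5) and, since every minimal element has $\lev(\varepsilon(b))=s$ and no element can have smaller level (as $\varepsilon$ of any $b$ pulls back from a weight of level $\ge s$ on the $C_n^{(1)}$ side), condition (4); hence $B^{r,s}$ is perfect of level $s$.

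The main obstacle I expect is the careful verification that the $f_i$-strings defining the minimal elements are genuinely compatible with the embedding $S$ — that is, that applying the doubled operators $f_i^{m_i}$ in $B_{C_n^{(1)}}^{r,2s}$ along the sequence built in Section~\ref{subsec:BDA} really lands in the image $S(B^{r,s})$ and produces the $\varepsilon$-preimage one wants. This requires knowing that the intertwining relations $S(f_ib)=f_i^{m_i}S(b)$ and $S(e_ib)=e_i^{m_i}S(b)$ hold throughout the relevant strings, so that $\varepsilon_i(S(b))=m_i\,\varepsilon_i(b)$ and the doubled $\pm$-diagram condition of~\cite[Lemma 3.5]{FOS:2008} is preserved at each step; this is precisely the point handled analogously in the proof of Proposition~\ref{prop:minimal B Vn}, so I would expect to cite that proof and check the $D_{n+1}^{(2)}$-specific values of $(m_i)$ rather than repeat the combinatorial bookkeeping.
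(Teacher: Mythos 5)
Your proposal is correct and follows essentially the same route as the paper: the paper's proof of this proposition simply says it is analogous to the proof of Proposition~\ref{prop:minimal A2n}, which in turn uses the embedding $S$ of Definition~\ref{def:embedding} into $B_{C_n^{(1)}}^{r,2s}$, the description of minimal elements from Proposition~\ref{prop:minimal C}, and the doubling argument of Proposition~\ref{prop:minimal B Vn} to conclude that $\varepsilon$ gives a bijection $B_{\min}^{r,s}\to P_s^+$. Your write-up just makes explicit (weight-image characterization, level preservation, intertwining $\varepsilon_i(S(b))=m_i\varepsilon_i(b)$) the bookkeeping that the paper leaves implicit in the word ``analogous.''
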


\begin{proof}
This proof is analogous to the proof of Proposition~\ref{prop:minimal A2n}.
\end{proof}
The automorphism $\tau$ is given by the identity.

\subsection{Type $D_{n+1}^{(2)}$ for $r=n$}
\label{subsec:D2n}
This case is already treated in~\cite{KMN2:1992}, which we summarize below.
As a $B_n$-crystal it is isomorphic to $B(s\La_n)$. There is a description of its elements in terms
of semistandard tableaux of $n\times s$ rectangular shape with letters from the alphabet $\mathcal{A}
=\{1<2<\cdots<n<\ol{n}<\cdots<\ol{1}\}$. Moreover, each column does not contain both $k$ and $\ol{k}$.
Let $c_i$ be the $i$th column, then the action of $e_i,f_i$ ($i=1,\ldots,n$) is calculated through that of
$c_s\otimes\cdots\otimes c_1$ of $B(\La_n)^{\otimes s}$. With this realization the minimal element 
$b_{\La}$ such that $\ve(b_{\La})=\La =\sum_{i=0}^n \ell_i \La_i \in P_s^+$ is given as follows. 
Let $x_{ij}$ ($1\le i\le n,j\in\mathcal{A}$) be the number of $j$ 
in the $i$th row. Note that $x_{ij}=0$ unless $i\le j\le\ol{n-i+1}$. The table $(x_{ij})$ of $b_{\La}$
is then given by $x_{ii}=\ell_0+\cdots+\ell_{n-i}\,(1\le i\le n)$, $x_{ij}=\ell_{j-i}\,(i+1\le j\le n)$,
$x_{i\ol{j}}=\ell_j+\cdots+\ell_n\,(n-i+1\le j\le n)$. The automorphism $\tau$ is given by
\[
\tau(\sum_{i=0}^n\ell_i\La_i)=\sum_{i=0}^n\ell_i\La_{n-i}.
\]

\section{Examples for type $C_3^{(1)}$}
\label{sec:ex}
\begin{figure}
  \scalebox{.7}{
    \begin{tikzpicture}[>=latex,join=bevel,]
\node (N_1) at (364bp,16bp) [draw,draw=none] {${\def\lr#1#2#3{\multicolumn{1}{#1@{\hspace{.6ex}}c@{\hspace{.6ex}}#2}{\raisebox{-.3ex}{$#3$}}}\raisebox{-.6ex}{$\begin{array}[b]{c}\cline{1-1}\lr{|}{|}{-1}\\\cline{1-1}\lr{|}{|}{-2}\\\cline{1-1}\end{array}$}}$};
  \node (N_2) at (323bp,426bp) [draw,draw=none] {${\def\lr#1#2#3{\multicolumn{1}{#1@{\hspace{.6ex}}c@{\hspace{.6ex}}#2}{\raisebox{-.3ex}{$#3$}}}\raisebox{-.6ex}{$\begin{array}[b]{c}\cline{1-1}\lr{|}{|}{-2}\\\cline{1-1}\lr{|}{|}{1}\\\cline{1-1}\end{array}$}}$};
  \node (N_3) at (135bp,262bp) [draw,draw=none] {${\def\lr#1#2#3{\multicolumn{1}{#1@{\hspace{.6ex}}c@{\hspace{.6ex}}#2}{\raisebox{-.3ex}{$#3$}}}\raisebox{-.6ex}{$\begin{array}[b]{c}\cline{1-1}\lr{|}{|}{-1}\\\cline{1-1}\lr{|}{|}{2}\\\cline{1-1}\end{array}$}}$};
  \node (N_4) at (49bp,672bp) [draw,draw=none] {${\def\lr#1#2#3{\multicolumn{1}{#1@{\hspace{.6ex}}c@{\hspace{.6ex}}#2}{\raisebox{-.3ex}{$#3$}}}\raisebox{-.6ex}{$\begin{array}[b]{c}\cline{1-1}\lr{|}{|}{2}\\\cline{1-1}\lr{|}{|}{1}\\\cline{1-1}\end{array}$}}$};
  \node (N_5) at (171bp,180bp) [draw,draw=none] {${\def\lr#1#2#3{\multicolumn{1}{#1@{\hspace{.6ex}}c@{\hspace{.6ex}}#2}{\raisebox{-.3ex}{$#3$}}}\raisebox{-.6ex}{$\begin{array}[b]{c}\cline{1-1}\lr{|}{|}{-1}\\\cline{1-1}\lr{|}{|}{3}\\\cline{1-1}\end{array}$}}$};
  \node (N_6) at (319bp,344bp) [draw,draw=none] {${\def\lr#1#2#3{\multicolumn{1}{#1@{\hspace{.6ex}}c@{\hspace{.6ex}}#2}{\raisebox{-.3ex}{$#3$}}}\raisebox{-.6ex}{$\begin{array}[b]{c}\cline{1-1}\lr{|}{|}{-2}\\\cline{1-1}\lr{|}{|}{2}\\\cline{1-1}\end{array}$}}$};
  \node (N_7) at (111bp,590bp) [draw,draw=none] {${\def\lr#1#2#3{\multicolumn{1}{#1@{\hspace{.6ex}}c@{\hspace{.6ex}}#2}{\raisebox{-.3ex}{$#3$}}}\raisebox{-.6ex}{$\begin{array}[b]{c}\cline{1-1}\lr{|}{|}{3}\\\cline{1-1}\lr{|}{|}{1}\\\cline{1-1}\end{array}$}}$};
  \node (N_8) at (320bp,98bp) [draw,draw=none] {${\def\lr#1#2#3{\multicolumn{1}{#1@{\hspace{.6ex}}c@{\hspace{.6ex}}#2}{\raisebox{-.3ex}{$#3$}}}\raisebox{-.6ex}{$\begin{array}[b]{c}\cline{1-1}\lr{|}{|}{-1}\\\cline{1-1}\lr{|}{|}{-3}\\\cline{1-1}\end{array}$}}$};
  \node (N_9) at (276bp,508bp) [draw,draw=none] {${\def\lr#1#2#3{\multicolumn{1}{#1@{\hspace{.6ex}}c@{\hspace{.6ex}}#2}{\raisebox{-.3ex}{$#3$}}}\raisebox{-.6ex}{$\begin{array}[b]{c}\cline{1-1}\lr{|}{|}{-3}\\\cline{1-1}\lr{|}{|}{1}\\\cline{1-1}\end{array}$}}$};
  \node (N_10) at (218bp,426bp) [draw,draw=none] {${\def\lr#1#2#3{\multicolumn{1}{#1@{\hspace{.6ex}}c@{\hspace{.6ex}}#2}{\raisebox{-.3ex}{$#3$}}}\raisebox{-.6ex}{$\begin{array}[b]{c}\cline{1-1}\lr{|}{|}{-3}\\\cline{1-1}\lr{|}{|}{2}\\\cline{1-1}\end{array}$}}$};
  \node (N_11) at (131bp,508bp) [draw,draw=none] {${\def\lr#1#2#3{\multicolumn{1}{#1@{\hspace{.6ex}}c@{\hspace{.6ex}}#2}{\raisebox{-.3ex}{$#3$}}}\raisebox{-.6ex}{$\begin{array}[b]{c}\cline{1-1}\lr{|}{|}{3}\\\cline{1-1}\lr{|}{|}{2}\\\cline{1-1}\end{array}$}}$};
  \node (N_12) at (313bp,180bp) [draw,draw=none] {${\def\lr#1#2#3{\multicolumn{1}{#1@{\hspace{.6ex}}c@{\hspace{.6ex}}#2}{\raisebox{-.3ex}{$#3$}}}\raisebox{-.6ex}{$\begin{array}[b]{c}\cline{1-1}\lr{|}{|}{-2}\\\cline{1-1}\lr{|}{|}{-3}\\\cline{1-1}\end{array}$}}$};
  \node (N_13) at (218bp,262bp) [draw,draw=none] {${\def\lr#1#2#3{\multicolumn{1}{#1@{\hspace{.6ex}}c@{\hspace{.6ex}}#2}{\raisebox{-.3ex}{$#3$}}}\raisebox{-.6ex}{$\begin{array}[b]{c}\cline{1-1}\lr{|}{|}{-2}\\\cline{1-1}\lr{|}{|}{3}\\\cline{1-1}\end{array}$}}$};
  \node (N_14) at (218bp,344bp) [draw,draw=none] {${\def\lr#1#2#3{\multicolumn{1}{#1@{\hspace{.6ex}}c@{\hspace{.6ex}}#2}{\raisebox{-.3ex}{$#3$}}}\raisebox{-.6ex}{$\begin{array}[b]{c}\cline{1-1}\lr{|}{|}{-3}\\\cline{1-1}\lr{|}{|}{3}\\\cline{1-1}\end{array}$}}$};
  \draw [<-] (N_2) ..controls (359bp,413bp) and (376bp,405bp)  .. (387bp,392bp) .. controls (402bp,374bp) and (405bp,366bp)  .. (405bp,344bp) .. controls (405bp,344bp) and (405bp,344bp)  .. (405bp,98bp) .. controls (405bp,75bp) and (401bp,69bp)  .. (391bp,50bp) .. controls (387bp,42bp) and (381bp,34bp)  .. (N_1);
  \pgfsetstrokecolor{black}
  \draw (436bp,221bp) node {$0$};
  \draw [<-] (N_4) ..controls (28bp,633bp) and (18bp,611bp)  .. (18bp,590bp) .. controls (18bp,590bp) and (18bp,590bp)  .. (18bp,344bp) .. controls (18bp,293bp) and (93bp,271bp)  .. (N_3);
  \draw (49bp,467bp) node {$0$};
  \draw [->] (N_3) ..controls (126bp,237bp) and (124bp,224bp)  .. (129bp,214bp) .. controls (133bp,205bp) and (142bp,197bp)  .. (N_5);
  \draw (160bp,221bp) node {$2$};
  \draw [->] (N_2) ..controls (321bp,398bp) and (320bp,383bp)  .. (N_6);
  \draw (352bp,385bp) node {$1$};
  \draw [->] (N_4) ..controls (67bp,648bp) and (84bp,625bp)  .. (N_7);
  \draw (113bp,631bp) node {$2$};
  \draw [->] (N_8) ..controls (321bp,73bp) and (323bp,60bp)  .. (329bp,50bp) .. controls (332bp,42bp) and (338bp,36bp)  .. (N_1);
  \draw (360bp,57bp) node {$2$};
  \draw [<-] (N_9) ..controls (352bp,494bp) and (482bp,465bp)  .. (482bp,426bp) .. controls (482bp,426bp) and (482bp,426bp)  .. (482bp,180bp) .. controls (482bp,113bp) and (370bp,101bp)  .. (N_8);
  \draw (513bp,303bp) node {$0$};
  \draw [->] (N_5) ..controls (198bp,162bp) and (225bp,145bp)  .. (249bp,132bp) .. controls (265bp,123bp) and (284bp,114bp)  .. (N_8);
  \draw (280bp,139bp) node {$3$};
  \draw [<-] (N_7) ..controls (99bp,548bp) and (95bp,527bp)  .. (95bp,508bp) .. controls (95bp,508bp) and (95bp,508bp)  .. (95bp,262bp) .. controls (95bp,225bp) and (137bp,197bp)  .. (N_5);
  \draw (126bp,385bp) node {$0$};
  \draw [->] (N_6) ..controls (312bp,317bp) and (306bp,304bp)  .. (296bp,296bp) .. controls (260bp,268bp) and (240bp,287bp)  .. (197bp,278bp) .. controls (184bp,275bp) and (169bp,271bp)  .. (N_3);
  \draw (338bp,303bp) node {$1$};
  \draw [->] (N_9) ..controls (292bp,487bp) and (297bp,480bp)  .. (301bp,474bp) .. controls (305bp,466bp) and (309bp,458bp)  .. (N_2);
  \draw (335bp,467bp) node {$2$};
  \draw [->] (N_9) ..controls (251bp,498bp) and (233bp,489bp)  .. (224bp,474bp) .. controls (220bp,467bp) and (218bp,459bp)  .. (N_10);
  \draw (259bp,467bp) node {$1$};
  \draw [->] (N_7) ..controls (135bp,581bp) and (168bp,569bp)  .. (195bp,556bp) .. controls (216bp,545bp) and (239bp,531bp)  .. (N_9);
  \draw (247bp,549bp) node {$3$};
  \draw [->] (N_7) ..controls (118bp,562bp) and (122bp,547bp)  .. (N_11);
  \draw (156bp,549bp) node {$1$};
  \draw [->] (N_12) ..controls (315bp,152bp) and (316bp,137bp)  .. (N_8);
  \draw (351bp,139bp) node {$1$};
  \draw [->] (N_13) ..controls (204bp,237bp) and (197bp,225bp)  .. (191bp,214bp) .. controls (189bp,211bp) and (187bp,208bp)  .. (N_5);
  \draw (232bp,221bp) node {$1$};
  \draw [->] (N_13) ..controls (241bp,249bp) and (258bp,239bp)  .. (271bp,228bp) .. controls (279bp,221bp) and (287bp,212bp)  .. (N_12);
  \draw (321bp,221bp) node {$3$};
  \draw [->] (N_10) ..controls (218bp,398bp) and (218bp,383bp)  .. (N_14);
  \draw (256bp,385bp) node {$2$};
  \draw [->] (N_11) ..controls (129bp,482bp) and (130bp,469bp)  .. (137bp,460bp) .. controls (151bp,441bp) and (177bp,433bp)  .. (N_10);
  \draw (175bp,467bp) node {$3$};
  \draw [->] (N_14) ..controls (218bp,316bp) and (218bp,301bp)  .. (N_13);
  \draw (256bp,303bp) node {$2$};
\end{tikzpicture}}
\caption{$B^{2,1}$ of type $C_3^{(1)}$ \label{fig:B21}}
\end{figure}
In this section we present the affine crystal structure for $B^{2,2}$ and $B^{2,1}$ of type $C_3^{(1)}$.
We also list all minimal elements for $B^{2,3}$ of type $C_3^{(1)}$ to illustrate that $\ve$ is not
a bijection and hence $B^{2,3}$ is not perfect.

\subsection{KR crystal $B^{2,2}$} 
The KR crystal $B^{2,2}$ has three classcial components
\begin{equation*}
	B^{2,2} \cong B(2\La_2) \oplus B(2\La_1) \oplus B(0).
\end{equation*}
The unique element in $B(0)$ is denoted by $u$. Since $f_0$ commutes with 
$f_2,f_3$ and the classical $C_3$-crystal structure is explicitly known by ~\cite{KN:1994},
it suffices to determine $f_0$ on each $\{2,3\}$-component. All $\{2,3\}$-highest weight crystal
elements are given in Table~\ref{tab:ex} together with the action of $f_0$.

\begin{table}
\begin{equation*}
\begin{array}{|l|l|}
\hline b & f_0(b) \\ \hline \mbox{}&\\[-3mm]
	\young(22,11) & \young(22)\\[3mm]
 	\young(23,12) & \young(3\ab,22)\\[0.3cm]
	\young(33,22) & \emptyset\\[3mm]
	\young(2\bb,12) & \young(2\ab)\\[3mm]
	\young(2\ab,12) & \emptyset \\[3mm]
	\young(3\ab,12) & \emptyset \\[3mm]
	\young(3\ab,22) & \emptyset \\[3mm]
	\young(\bb\ab,12) & \emptyset \\[3mm]
	\hline
\end{array}
\qquad
\begin{array}{|l|l|}
\hline b & f_0(b) \\ \hline \mbox{} & \\[-3mm]
	\young(\bb\ab,22) & \emptyset \\[3mm]
	\young(\ab\ab,22) & \emptyset \\[3mm]
	\young(12) & \young(\bb\ab,22) \\[3mm]
	\young(22) & \young(\ab\ab,22) \\[3mm]
	\young(1\ab) & \emptyset \\[3mm]
	\young(2\ab) & \emptyset \\[3mm]
	\young(\ab\ab) & \emptyset \\[3mm]
	\young(11) & u \\[3mm]
	u & \young(\ab\ab)\\[3mm] \hline
\end{array}
\end{equation*}
\caption{Action of $f_0$ on $\{2,3\}$-highest weight elements in $B^{2,2}$ of type $C_3^{(1)}$
\label{tab:ex}}
\end{table}

The bijection $\ve: B^{2,2}_{\min} \to P_1^+$ is given by
\begin{equation*}
\begin{array}{|l|l|} \hline
	b & \ve(b)\\ \hline
	u & \La_0\\[3mm] 
	\young(1\ab) & \La_1\\[3mm]
	\young(2\ab,1\bb) & \La_2\\[3mm] 
	\young(3\bb,2\cb) & \La_3\\[3mm] \hline
\end{array}
\end{equation*}

\subsection{KR crystal $B^{2,1}$}
The KR crystal graph for $B^{2,1}$ of type $C_3^{(1)}$ is given in Figure~\ref{fig:B21}. It has only 
one classical component
\begin{equation*}
	B^{2,1} \cong B(\La_2).
\end{equation*}
$B^{2,1}$ is not perfect, since $\ve$ is not a bijection from minimal elements to level 1 dominant
weights:
\begin{equation*}
\begin{array}{|l|l|} \hline
	b & \ve(b)\\ \hline 	
	\mbox{} & \\[-3mm]
	\young(2,1) & \La_0\\[3mm]
 	\young(3,2) \quad \young(\bb,2) & \La_1\\[3mm]
	\young(\cb,3) \quad \young(\ab,\bb) & \La_2 \\[3mm]
	\young(\bb,\cb) & \La_3 \\[3mm] \hline
\end{array}
\end{equation*}

\subsection{KR crystal $B^{2,3}$}
The KR crystal $B^{2,3}$ of type $C_3^{(1)}$ is also not perfect. The map $\ve$ from the minimal
elements to level 2 dominant weights is given below:
\begin{equation*}
\begin{array}{|l|l|} \hline
	b & \ve(b)\\ \hline 	
	\mbox{} & \\[-3mm]
	\young(2,1) & 2\La_0\\[4mm]
	\young(2,11\ab) \quad \young(\bb,2) \quad \young(3,2) & \La_0+\La_1\\[4mm]
	\young(22\ab,11\bb) \quad \young(\ab,\bb) \quad \young(\cb,3) \quad \young(\bb,12\bb)
	\quad \young(3,12\bb) & \La_0+\La_2\\[4mm]
	\young(23\bb,12\cb) \quad \young(\bb,\cb) \quad \young(\cb,13\cb) & \La_0+\La_3\\[4mm]	
	\young(3,12\ab) \quad \young(\bb,12\ab) \quad \young(2,1\ab\ab) & 2\La_1\\[4mm]
	\young(23\ab,12\bb) \quad \young(2\bb\ab,12\bb) \quad \young(\cb,13\ab) \quad
	\young(\bb,2\bb\ab) \quad \young(3,2\bb\ab) & \La_1+\La_2\\[4mm]
	\young(3\cb\ab,13\cb) \quad \young(33\bb,22\cb) \quad \young(2\cb\ab,13\cb) \quad
	\young(\bb,1\cb\ab) \quad \young(\cb,3\cb\ab) & \La_1+\La_3\\[4mm]
	\young(2\ab\ab,1\bb\bb) \quad \young(2\cb\ab,13\bb) \quad 
	\young(3\cb\ab,13\bb) & 2\La_2\\[4mm]
	\young(3\bb\ab,2\cb\bb) \quad \young(3\cb\bb,23\cb) \quad
	\young(2\bb\ab,1\cb\bb) & \La_2+\La_3\\[4mm]
	\young(3\bb\bb,2\cb\cb) & 2\La_3\\[4mm] \hline
	\end{array}
\end{equation*}
Under the embedding $S:B^{2,3} \to B^{2,3}_{A_7^{(2)}}$ of Definition~\ref{def:KR C}
we have
\begin{equation*}
	S\left( \young(\bb,2\bb\ab) \right) = \young(2\bb\ab,1\cb\bb) = b_1 \quad \text{and} \quad
	S\left( \young(\cb,13\ab) \right) = \young(2\db\ab,14\bb) = b_2
\end{equation*}
which are precisely the two elements $b_1,b_2$ of Proposition~\ref{prop:C nonperfect} such that
$\ve(b_1)=\ve(b_2)=\La_2+\La_3$ in type $A_7^{(2)}$.

\end{document}